\numberwithin{equation}{section}
\def\cal{\mathcal}
\def\Bbb{\mathbb}
\def \supp {\text{\rm supp\,}}
\def\B{{\cal B}}
\def\H{{\mathcal H}}
\def\D{{\cal D}}
\def\E{{\cal E}}
\def\F{{\cal F}}
\def\K{{\cal K}}
\def\M{{\cal M}}
\def\S{{\cal S}}
\def\cP{{\cal P}}
\def\NN{{\Bbb N}}
\def\R{{\Bbb R}}
\def\RR{{\Bbb R}}
\def\vp{{\varphi}}
\def\al{{\alpha}}
\def\la{{\lambda}}
\def\pa{{\partial}}
\def\ve{{\varepsilon}}
\def\si{{\sigma}}
\def\de{{\delta}}
\def\tr{\text{\rm tr\,}}
\def\bpm{\begin{pmatrix}}
\def\epm{\end{pmatrix}}
\def\bee{\begin{enumerate}}
\def\ee{\end{enumerate}}
\newcommand{\jb}{{\underline j}}
\newcommand{\kb}{{\underline k}}
\newtheorem{thm}{Theorem}[section]
\newtheorem{proposition}[thm]{Proposition}
\newtheorem{cor}[thm]{Corollary}
\newtheorem{lemma}[thm]{Lemma}
\newtheorem{remark}[thm]{Remark}
\theoremstyle{plain}
\begin{document}


\title[Bounds on $\Psi$DO{\tiny s}  and Fourier restriction for Schatten classes]
{Bounds on pseudodifferential operators  and Fourier restriction for Schatten classes}

\author[D. M\"uller]{Detlef M\"uller}
\address{Mathematisches Seminar, C.A.-Universit\"at Kiel,
Heinrich-Hecht-Platz 6, D-24118 Kiel, Germany} \email{{\tt
mueller@math.uni-kiel.de}}
\urladdr{{http://analysis.math.uni-kiel.de/mueller/}}

\date{18.12.24}

\thanks{2020 {\em Mathematical Subject Classification.}
Primary: 42B10, 47G30,47B10 Secondary: 43A30}

\thanks{{\em Key words and phrases.}
 Fourier restriction, Schatten class, quantum harmonic analysis, Werner's convolution product}

\begin{abstract}

As  main result, we show that a pseudodifferential operator in the Weyl calculus, whose symbol has  compact Fourier support,  lies in the Schatten class $\S^p$ if and only if its symbol lies in the Lebesgue space $L^p$ on  phase space. 

As an immediate consequence, this gives an alternative and very lucid proof  of a recent result by Luef and Samuelsen, who had discovered that for compactly supported measures $\mu,$ classical Fourier restriction estimates with respect to the measure $\mu$  are equivalent to  quantum restriction estimates for the Fourier-Wigner transform  for Schatten classes.   
  \end{abstract}

\maketitle


\tableofcontents

\thispagestyle{empty}

\section{Introduction}\label{intro}
 Suppose $\mu$ is a non-negative Radon measure on $\RR^n.$ The Fourier restriction problem associated to $\mu$,  introduced by E. M. Stein in the seventies (for measures like the Riemannian volume measure $\mu=\si$  of some smooth submanifold of 
 $\RR^n$), asks for  the range  of Lebesgue exponents
$ p$ and $q$ for which  an a priori  estimate of the form
\begin{equation}\label{Frest}
\big(\int|\widehat{f}|^{q}\,d\mu\big)^{\frac 1q}\le C\|f\|_{L^{p}(\R^n)}
\end{equation}
holds  true for   every Schwartz function   $f\in\mathcal S(\R^n),$ with a constant $C$
independent of $f.$ Here, $\hat f(\xi)=\int e^{-2\pi i \xi x} f(x)dx$ denotes the Euclidean Fourier transform of $f.$

There exists meanwhile a huge number of works on this topic, which has important applications to many other fields as well -- 
let me just give a brief overview of some of the important developments. 
The sharp range of Lebesgue exponents in dimension  $n=2$ for curves with non-vanishing curvature was determined
through work by  C. Fefferman, E. M. Stein and A. Zygmund \cite{F1}, \cite{Z}. In higher
dimension, the sharp range of $L^{ p}-L^2$ estimates  for hypersurfaces with
non-vanishing Gaussian curvature was obtained by  P. A. Tomas  and, for the end point,  by E. M. Stein \cite{To} (see also \cite{St1}).
 Such estimates, in dualized form  also known as Strichartz estimates, are of great importance also to the theory of linear and non-linear partial differential operators (compare  Strichartz \cite{Str}). Some more general classes of surfaces were
treated by A. Greenleaf \cite{Gr}. In work by I. Ikromov, M.
Kempe and D. M\"uller  \cite{ikm}  and Ikromov and M\"uller \cite{IM-uniform}, \cite{IM},
the sharp range of Tomas-Stein type    $L^{ p}-L^2$  restriction
estimates has been  determined  for a large class  of smooth, finite-type hypersurfaces,
including all analytic hypersurfaces.

The question about  general  $L^{p}-L^{q}$ restriction
estimates is nevertheless still wide open. Fourier restriction to hypersurfaces  with non-negative principal curvatures has been studied intensively by many authors. Major progress was due to J. Bourgain in the nineties (\cite{Bo1}, \cite{Bo2}, \cite{Bo3}). At the end of that decade the bilinear method was introduced (\cite{MVV1}, \cite{MVV2}, \cite{TVV} \cite{TV1}, \cite{TV2},  \cite{W2}, \cite{T2},  \cite{lv10}). A new impulse to the problem has been given with  the multilinear method (\cite{BCT}, \cite{BoG}). The best results up to date have been obtained with the polynomial partitioning method, developed by L. Guth (\cite{Gu16},
\cite{Gu18}) (see also \cite{hr19} and \cite{WA18} for recent improvements).

Initiated by ideas of G. Mockenhaupt \cite{Mo}, Fourier restriction with respect to fractal measures $\mu$ has been studied as well (see, e.g., \cite{La}).
\medskip

Typically, Fourier restriction estimates are proved by duality, i.e., by  passing to the adjoint to the Fourier restriction operator, the so-called 
 {\it Fourier extension} operator, given by
$$
\E f(\xi):=\widehat{f\,d\mu}(\xi)= \int f(x)e^{2\pi i\xi\cdot x}\,d\mu(x),
$$
and proving dual estimates of the form
\begin{equation}\label{Fext}
\|\E f\|_{L^{p'}(\R^n)}\le C\|f\|_{L^{q'}(d\mu)},
\end{equation}
where $p'$ and $q'$ denote the Lebesgue exponents conjugate to $p$ and $q$ (i.e., $1/p+1/p'=1$ and $1/q+1/q'=1$).

More recently, operator valued Fourier extension operators have been studied  by Mishra and Vemuri in \cite{MV1}, \cite{MV2} for functions on phase space $\RR^{2d},$ as well as dual Fourier restriction restriction estimates on ``quantum Euclidean space'' in \cite{HLW} by Hong, Lai and Wang.

These  Fourier extension operators are defined by means of the Weyl calculus of pseudo-differential operators. Let me here only briefly sketch the key idea of this -- for more details I refer to Folland's beautiful monograph \cite{Fo}, and Section \ref{prelim}:
\smallskip

If $a$ is a suitable  {\it symbol} (or {\it Hamiltonian} in classical mechanics)  on the phase space $\RR^{2d}=\RR^{d}\times\RR^d$ of $\RR^d,$ then we denote by $L_a$ the pseudo-differential  operator on 
$\RR^d$  associated to the symbol $a$ in the {\it Weyl calculus} (i.e., the {\it Weyl quantization} of $a$,  which is the most symmetric of all reasonable quantizations of the Hamiltonian $a$). Formally, this is given by 
$$
L_a:=\iint\hat a(x,\xi) e^{2\pi i(xD+\xi X)} dxd\xi,
$$
where $D=(D_1,\dots,D_d)$ and $X=(X_1,\dots,X_d),$ and where $X_j$ and $D_j$ denote the (unbounded) self-adjoint  operators of {\it position} $(X_jf)(x):=x_j f(x)$ and {\it momentum} $(D_jf)(x):=\frac{1}{2\pi i}\pa_{x_j}f(x)$ of quantum mechanics, which are densely defined on the Hilbert sapce $L^2(\RR^d).$ 
\smallskip

This Weyl quantization is intimately related to the Schr\"odinger representation $\pi_1$ of parameter $1$ of the $(2d+1)$-dimensional Heisenberg group $\Bbb H_d$ (as will be explained in the next section). Indeed, if  we use $(z,u)\in \RR^{2d}\times\RR$ as the coordinates of the Heisenberg group $\Bbb H_d,$ then we obtain a projective representation $\rho$ of phase space $\RR^{2d}$ by setting $\rho(z):=\pi_1(z,0).$ For suitable symbols $a$ (for instance  in $\S(\RR^{2d})$) we then have that 
\begin{equation}\label{Law}
L_a=\rho(\F_\si a)=\int_{\RR^{2d}} \F_\si a(z)\rho(z) dz,
\end{equation}
where the operator valued integral is to be understood in the weak sense, and where $\F_\si (a)$ denotes the so-called {\it symplectic Fourier transform}
$$
\F_\si a(w):=\int_{\RR^{2d}} a(z) e^{-2\pi i \si(w,z)} dz=\hat a(Jw), \qquad w\in\RR^{2d},
$$
of $a$. Here, $\si$ denotes  the symplectic form $\si((x,\xi), (x',\xi')):=x'\xi-x\xi'$ on $\RR^{2d}=\RR^d\times \RR^d,$ and 
$J:=\small{\left(
\begin{array}{ccc}
  0   &-1   \\
  1   &  0 \\
\end{array}
\right)}
$
the corresponding symplectic matrix.
Note that $\F_\si\circ \F_\si={\rm Id},$ so that we may re-write \eqref{Law} as 
\begin{equation}\label{Law2}
\int_{\RR^{2d}} u(z)\rho(z) dz=L_{\F_\si u}
\end{equation}
for suitable functions (or even distributions) $u$ on $\RR^{2d}.$ We may thus use the representation $\rho$ to define by 
$$
\rho(u):=\int_{\RR^{2d}}u(z)\rho(z) dz
$$ 
(which is sometimes, as in \cite{MV1}, \cite{MV2}, called the ``Weyl-transform'' of $u$) to define  a non-commutative, operator-valued  ``quantum'' analogue of the Euclidean Fourier transform $\hat u$ of $u.$

Correspondingly, if $\mu$ is a non-negative Radon measure on $\RR^n,$ then we may try to define by 
$$
\E_W(f):=\rho(f d\mu)=\int_{\RR^{2d}}  \rho(z) f(z) d\mu(z),
$$
a {\it quantum extension operator}, in analogy with our previous Fourier extension operator $\E.$ Note that, by setting $u:=fd\mu,$  then by \eqref{Law2}
$$
\E_W(f)=L_{\F_\si (fd\mu)}
$$
(at least formally). Since a natural quantum analogue of the space $L^{p'}(\RR^{2d})$ will  here be the Schatten class $\S^{p'}$ on the representation space $L^2(\RR^d),$ the quantum analogue of the Fourier extension estimate \eqref {Fext} would then be the estimate
\begin{equation}\label{qext}
\|\E_W(f)\|_{\S^{p'}}\le C_W \|f\|_{L^{q'}(\mu)}.
\end{equation}
$\S^p$ - estimates of $E_W(1)$ have been studied in \cite{MV1}, \cite{MV2} for surface measures $\mu$ on particular  classes of hypersurfaces in $\RR^{2d}.$ 
\smallskip

The dual estimate to  estimate \eqref{qext} turns out to be the ''quantum restriction estimate''
\begin{equation}\label{qrest}
\|\F_W(T)\|_{L^q(\mu)}\le C_W\|T\|_{\S^p},
\end{equation}
where for  any trace class operator $S\in \S^1$ on $L^2(\RR^d),$ its {\it Fourier-Wigner transform} (also called 
{\it Fourier-Weyl transform}) $\F_WS$ is defined to be the function
$$
\F_WS(z):= \tr(\rho(z)^*S), \qquad z\in\RR^{2d}.
$$
We refer to \cite{HLW}, where such kind of estimates have been studied.
\medskip

Now, in the beautiful recent preprint \cite{LSa}, Luef and Samuelsen have discovered that for {\bf compactly} supported Radon measures $\mu,$ the   quantum restriction and extension estimates \eqref{qrest} and \eqref{qext} are indeed equivalent to their classical Euclidean counterparts \eqref{Frest} and \eqref{Fext}. They prove this by showing first in \cite[Theorem1.1]{LSa} that the classical restriction estimate \eqref{Frest} and its quantum analogue \eqref{qrest} are indeed equivalent, and then proceed by duality for the remaining equivalences.
The proofs are original, but do perhaps not  really reveal the deeper reason for such a result  to hold true.

A second main result in \cite{LSa} is Theorem 1.4, which states that  for compactly supported  Radon measures $\mu,$ 
the operator $L_{\F_\si (\mu)}$ is compact if and only if $\F_\si (\mu)\in C_\infty(\RR^{2d}),$ and that $L_{\F_\si (\mu)}\in \S^p$ if and only if 
$\F_\si (\mu)\in L^p(\RR^{2d}),$ for $1\le p\le \infty.$ 
 This  theorem stands rather parallel to Theorem 1.1.

\medskip
The main goal of the present article is  to  unveil the deeper reasons for \cite[Theorem1.1]{LSa} to hold.

To this end, we shall first prove, as our {\it main result,}  in Theorems \ref{compactL} and \ref{schattenests}  a far reaching extension of \cite[Theorem1.4]{LSa}, holding  for compactly supported distributions $u$ in place of  compactly supported measures $\mu.$  Recall  that if the distribution $u$ has compact support, then $\F_\si u$ is real-analytic.

For Schatten class norms, our result  essentially says that  for compactly supported distributions $u,$ the 
$\S^p$-norm of the operator $L_{\F_\si(u)}$ and the $L^p$-norm of the function $\F_\si(u)$ are comparable:
\begin{equation}\label{psidoest}
\|L_{\F_\si(u)}\|_{\S^p}\simeq_R \|\F_\si(u)\|_{L^p(\RR^{2d})}, \qquad 1\le p\le \infty,
\end{equation}
where the implicit constants in the corresponding inequalities can be chosen to depend only on the diameter $R$ of the support of 
$u.$ For more precise information on these implicit constants we refer to Theorem \ref{schattenests}) in Section \ref{Schaest}.

\smallskip 
We believe that these results are of  interest in their own right.

\smallskip

Note, however,  that \eqref{psidoest} in particular immediately  implies the equivalence of the classical Fourier extension estimate \eqref{Fext} to  its quantum analogue \eqref{qrest}.
\smallskip

Our approach is thus very different from \cite{LSa}, concentrating first on Fourier extension estimates (as one usually also does when studying Euclidean Fourier restriction estimates), and then passing to restriction estimates by duality.  

We also note that our approach and estimates in Theorem \ref{schattenests} allow for a much better, polynomial  control of the relative sizes of the constants in Theorem \ref{restrictionS} in terms of the diameter $R$ of the support of the measure $\mu$ than in \cite{LSa}, where they grew like $e^{\pi R^2/2}$ 
(compare Remark \ref{improveconst}).
\smallskip

Main tools for our approach will be the theorem of Calder\'on-Vaillancourt  (see Theorem \ref{CaVa}), and Werner's commutative convolution product for Schatten  class operators. The latter is an important tool also in  \cite{LSa}, but in a  different way.

\subsection{Notation}\label{ss:notation}

If $A$ and $B$ are two nonnegative numbers, we write $A \lesssim B$ to indicate that there exists a constant $C>0$ such that $A \leq C B$. We also write $A \simeq B$ to denote the conjunction of $A \lesssim B$ and $B \lesssim A$. Subscripted variants such as $\lesssim_R$ and 
$\simeq_R$ are used to indicate that the implicit constants may depend on a parameter $R$. We shall also use the 
``variable constant'' notation, meaning that a constant $C$ may be of a different size in different lines.

For $x,y\in \RR^n,$ we shall denote by $xy$ the  Euclidean inner product given by $xy:=\sum_j x_jy_j.$ The hermitian inner product on $L^2(\RR^d)$ will be written as 
$$
(f,g):=\int_{\RR^d} f(x)\overline {g(x)} dx.
$$ 
If $u\in \S'(\RR^n)$ is a tempered distribution and  $\vp\in\S(\RR^n)$ a Schwartz function, then we shall write $\langle u, \vp\rangle:=u(\vp).$  Accordingly, for suitable functions $f,g$ on $\RR^d,$ we shall write 
$$
\langle f,g\rangle:=\int_{\RR^d} f(x)g(x) dx.
$$

\section{Background and Preliminaries}\label{prelim}

An important reference for the discussions in this section will be \cite{Fo}. However, since the authors in  \cite{LSa} use slightly different coordinates than in \cite{Fo}, we shall rather keep to the  notation in the latter paper. Let us begin by re-calling a number of notions.

\medskip
The {\it Heisenberg group} $\Bbb H_d$ is   $\RR^d\times \RR^d\times \RR$ as a manifold, but endowed with the non-commutative product 
$$
(x,\xi,u)(x',\xi',u'):=(x+x', \xi+\xi', u+u'+\frac {x'\xi-x\xi'}2),
$$
which turns $\Bbb H_d$ into a 2-step nilpotent Lie group.

We also use $z=(x,\xi)\in\RR^d\times \RR^d= \RR^{2d}$ for coordinates of the  {\it phase space}  $\RR^{2d}$, which is endowed as usually with the {\it symplectic form}
$$
\si((x,\xi), (x',\xi')):=x'\xi-x\xi'.
$$

\medskip

The {\it Schr\"odinger represention} $\pi_1$ of $\Bbb H_d$  on $L^2(\RR^d)$ with central character $u\mapsto e^{2\pi i u}$ is given by
$$
[\pi_1(x,\xi,u) g](t):= e^{2\pi i u}e^{-\pi ix\xi+2\pi it\xi}g(t-x), \qquad g\in L^2(\RR^d).
$$

\medskip

By restricting to $\RR^{2d},$ we obtain the {\it projective Schr\"odinger representation} $\rho$ of $\RR^{2d}$ on $L^2(\RR^d):$
$$
[\rho(x,\xi) g](t):= [\pi_1(x,\xi,0) g](t):=e^{-\pi ix\xi+2\pi it\xi}g(t-x), \qquad g\in L^2(\RR^d),
$$
which satisfies
\begin{equation}\label{proj}
\rho(z+z')= e^{i\pi \si(z,z')}\rho(z)\rho(z').
\end{equation}

The integrated representation of $L^1(\RR^{2d}, dz)$ is defined by 
$$
\rho(F)g:=\int_{\RR^{2d}} F(z) \rho(z) g \,dz, \qquad F\in L^1(\RR^{2d}), g\in L^2(\RR^d).
$$
where the integral is to be read as a Bochner integral.
Then $\rho(F)$ is an integral operator
$$
[\rho(F)g](t)=\int_{\RR^{d}} k_F(t,x) g(x) dx,
$$
with integral kernel
\begin{equation}\label{intker}
k_F(t,x):=\int F(t-x,\xi)e^{\pi i(t+x)\xi}d\xi=\F^\xi (F)(t-x,-\frac{t+x} 2),
\end{equation}
where $\F^\xi$ denotes the partial (Euclidean) Fourier transform with respect to $\xi.$ 
\smallskip

Conversely, if $K$ is a suitable integral kernel, then we shall denote by 
$$
[T_K g](x):= \int_{\RR^{d}} K(t,x) g(x) dx,
$$
the corresponding integral operator, so that in particular $\rho(F)=T_{k_F}.$
\smallskip

In particular,  if $F\in \S(\RR^{2d}),$ then  the operator $\rho(F)$ is of trace class, and  by Fourier inversion we get 
\begin{equation}\label{trace}
\tr\rho(F)=\int k_F(x,x) dx= \int \F^\xi(F)(0,-x) dx= F(0,0).
\end{equation}

We next recall that the  previous definition of  $\rho(F)$  can  be extended  to tempered distributions (compare  \cite[Theorem (1.30)]{Fo}; note  that Folland uses a slightly different, but equivalent,  definition of the projective representation $\rho$ -- see Remark \ref{FoLSa}):
\smallskip

if $u\in \S'(\RR^n),$ then  $\rho(u)$ is the  continuous linear operator $\rho(u):\S(\RR^d)\to \S'(\RR^d),$ whose 
 Schwartz kernel  $k_u$ is given by the tempered distribution 
\begin{equation}\label{ku}
k_u(t,x):=(\F^\xi u)(t-x,-\frac{t+x} 2),
\end{equation}
where the  linear change of coordinates in the argument of $\F^\xi u$  is to be interpreted in the sense of distributions. Recall that this means that 
$$
\langle \rho(u)\psi,\vp\rangle=\langle k_u,\vp\otimes\psi\rangle \qquad\text{for all} \ \vp,\psi\in \S(\RR^d).
$$

One  checks easily that the linear mapping $\cP:\S'(\RR^{2d})\to \S'(\RR^{2d}),$ 
\begin{equation}\label{Pdef}
\cP:u\mapsto k_u,
\end{equation}
given by \eqref{ku} is a topological isomorphism, which is unitary as an  operator on $L^2(\RR^{2d}).$  Moreover, when restricted to $\S(\RR^{2d}),$ $\cP$ becomes a topological automorphism of the F\'rechet space $\S(\RR^{2d}).$ Also in this situation, we shall write $\rho(u)=T_{k_u}.$ 
\smallskip

In particular, note that, by some easy computations, we find that for  $u\in \S'(\RR^{2d})$ and  $F\in \S(\RR^{2d})$  the following identity holds true:
\begin{equation}\label{cP}
\langle k_u,k_F\rangle=\langle u,P_\xi F\rangle,
\end{equation}
where $P_\xi F(x,\xi):=F(x,-\xi).$

\medskip
If we introduce as usually the {\it twisted convolution product} for  suitable functions on $\RR^{2d}$ by setting 
$$
F\times G(z):= \int_{\RR^{2d}} F(z')G(z-z') e^{\pi i \si(z',z-z')}dz' \qquad (\text{say} \ F,G\in L^1(\RR^{2d})),
$$
then 
\begin{equation}\label{rhotwist}
\rho(F\times G)=\rho(F)\rho(G).
\end{equation}

\medskip
The {\it symplectic Fourier transform} $\F_\si (F)$ of $F\in L^1(\RR^{2d})$ is defined by
$$
\F_\si F(w):=\int_{\RR^{2d}} F(z) e^{-2\pi i \si(w,z)} dz, \qquad w\in\RR^{2d}.
$$
Note that $\F_\si\circ \F_\si={\rm Id}.$

\medskip
By $\K$ we  denote the space of all compact linear operators on  the Hilbert space $L^2(\RR^d).$ $\K$ forms a closed, two-sided ideal in the 
$C^*$-algebra $\B(L^2(\RR^d))$ of all bounded linear operators on $L^2(\RR^d).$ 

For $1\le p< \infty,$ we denote by  $\S^p\subset \K$ the {\it $p$-Schatten class}  (which forms a non-commutative analogue of the $\ell^p$ space; see, e.g., \cite{Si}). The space $\S^p$ is endowed with the  norm
$$
\|T\|_{\S^p}:= \tr (|T|^p)^{\frac 1p}=\tr\big((T^*T)^{\frac p2}\big)^{\frac 1p},  \qquad T\in \S^p,
$$
where $\tr S$ denotes the trace of a trace class operator $S.$
 Recall that  $(\S^p,\|\cdot\|_{\S^p})$ is a  Banach space, which forms a two-sided ideal in the algebra $\K.$ Indeed, $\S^p$ is even a two-sided ideal in $\B(L^2(\RR^d)),$ and if $T\in\S^p$ and $A,B\in \B(L^2(\RR^d)),$ then
 \begin{equation}\label{ATB}
\|ATB\|_{\S^p}\le \|A\| \|T\|_{\S^p}\|B\|.
\end{equation}
 
 It is useful to also define $\S^\infty:=\B(L^2(\RR^d)),$ endowed with the operator norm $\|\cdot\|.$ Then 
 $$
 \S^1\subset \S^p\subset \S^q, \qquad \text{if} \ 1\le p\le q\le \infty.
 $$
 Moreover, $\S^1=\S^1\cap \S^\infty$ is dense in $\S^p$ for $1\le p<\infty,$ and 
 $$
 \S^p=[\S^1,\S^\infty]_\theta
 $$ 
 is the interpolation space between $\S^1$ and $\S^\infty$ given by the complex interpolation method, with parameter $\theta$ defined by  $1/p=1-\theta$ (see \cite{Si}, \cite{BL}).
\medskip

Note: The subspace of all $\rho(F), F\in \S(\RR^{2d}),$ is dense in $\S^p$ for $1\le p<\infty$ (see, e.g., Lemma \ref{rhoSdense}).
\medskip

Next, we recall 
{\it Werner's convolution product} for Schatten class operators in QM-Harmonic Analysis:

For $T_1,T_2\in \S^1,$ one puts
$$
T_1\star T_2(w):=\tr[\rho(-w)T_1\rho(w)\, PT_2P],\qquad w\in\RR^{2d},
$$
where $P$ denotes the involution 
$$
Pg(t):=\check  g:=g(-t),\qquad  g\in L^2(\RR^d).
$$

\smallskip

This convolution is associative and commutative and satisfies the following analogue of Young's inequality (see \cite[Prop. 3.2]{W},\cite[Prop. 4.2]{LSk}).

\begin{proposition}[Young's inequality for Werner's convolution]\label{youngw}
Let $1\le p,q,r\le \infty$ so that $1/p+1/q=1+1/r.$ Then for $T_1,T_2\in \S^1,$ we have
$$
\|T_1\star T_2\|_{L^r}\le \|T_1\|_{\S^p} \|T_2\|_{\S^q}.
$$
Consequently, Werner's convolution extends by continuity to a bounded bilinear mapping 
$\star: \S^p\times \S^q\to L^r(\RR^{2d}),$ if $p,q<\infty.$ Moreover, if $p=\infty$ (or $q=\infty$),  then $q=1$ (respectively  $p=1$), and the convolution product 
as defined above  still makes sense and the convolution inequality remains valid as well.
\end{proposition}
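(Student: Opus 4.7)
The plan is to reduce the full range of $(p,q,r)$ to a small number of endpoint inequalities and then interpolate. Since the underlying bilinear map factors through the unitary operators $\rho(\pm w)$ and $P$, the $L^\infty$ endpoints are almost automatic; the real work is concentrated in the $L^1$ endpoint, after which everything follows from complex interpolation on the Schatten scale $\S^p = [\S^1,\S^\infty]_{1-1/p}$.

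\textbf{Step 1: the two $L^\infty$ endpoints.} For any $p,p'$ with $1/p+1/p'=1$, Hölder's inequality for Schatten classes together with the unitarity of $\rho(w)$ and $P$ gives the pointwise estimate
\[
|T_1\star T_2(w)|=|\tr(\rho(-w)T_1\rho(w)\,PT_2P)|\le \|T_1\|_{\S^p}\|T_2\|_{\S^{p'}}.
\]
Specialising to $(p,p')=(1,\infty)$ and $(\infty,1)$ yields the two endpoints $\S^1\times\S^\infty\to L^\infty$ and $\S^\infty\times\S^1\to L^\infty$.

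\textbf{Step 2: the $L^1$ endpoint.} I want to show $\|T_1\star T_2\|_{L^1}\le\|T_1\|_{\S^1}\|T_2\|_{\S^1}$. The key input is the orthogonality (Moyal) relation for the square-integrable projective representation $\rho$,
\[
\int_{\RR^{2d}} |\langle f,\rho(w)g\rangle|^2\,dw=\|f\|^2\|g\|^2, \qquad f,g\in L^2(\RR^d),
\]
which is classical for the Schrödinger representation. Given the singular value decompositions $T_i=\sum_k \sigma_k^{(i)}|\phi_k^{(i)}\rangle\langle\psi_k^{(i)}|$, a direct expansion of the trace gives
\[
T_1\star T_2(w)=\sum_{j,k}\sigma_j^{(1)}\sigma_k^{(2)}\,\langle\psi_j^{(1)},\rho(w)P\phi_k^{(2)}\rangle\,\overline{\langle\phi_j^{(1)},\rho(w)P\psi_k^{(2)}\rangle},
\]
up to harmless phase factors from the projectivity of $\rho$. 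Cauchy--Schwarz in $w$ combined with Moyal's identity bounds the $w$-integral of each modulus by $1$, so that $\|T_1\star T_2\|_{L^1}\le\sum_{j,k}\sigma_j^{(1)}\sigma_k^{(2)}=\|T_1\|_{\S^1}\|T_2\|_{\S^1}$. Truncating to finite rank first and using dominated convergence justifies the term-by-term integration.

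\textbf{Step 3: bilinear complex interpolation.} Fix $T_2\in\S^1$. Steps 1 and 2 show that $T_1\mapsto T_1\star T_2$ maps $\S^1\to L^1$ and $\S^\infty\to L^\infty$, each with norm at most $\|T_2\|_{\S^1}$. Complex interpolation on the Schatten scale gives $\|T_1\star T_2\|_{L^p}\le\|T_1\|_{\S^p}\|T_2\|_{\S^1}$ for every $1\le p\le\infty$. Now fix $T_1\in\S^p$ and consider $T_2\mapsto T_1\star T_2$. By what we just proved it maps $\S^1\to L^p$, and by the Hölder endpoint of Step 1 it maps $\S^{p'}\to L^\infty$, each with norm at most $\|T_1\|_{\S^p}$. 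A second complex interpolation with parameter $\theta$ satisfying $1/q=(1-\theta)+\theta/p'$ and $1/r=(1-\theta)/p$ produces the bound $\|T_1\star T_2\|_{L^r}\le\|T_1\|_{\S^p}\|T_2\|_{\S^q}$; eliminating $\theta$ yields precisely the index relation $1/p+1/q=1+1/r$ (note that this relation forces $r\ge p$ and $r\ge q$, so the parameters are admissible). The final extension from $\S^1$ to $\S^p$ for $p=\infty$ is just Step 1 again, and the continuous extension of $\star$ to $\S^p\times\S^q$ then follows from density of $\S^1$.

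\textbf{Main obstacle.} The delicate point is the $L^1$ endpoint: for self-adjoint positive $T_i$ the argument is transparent because $T_1\star T_2\ge 0$ and the integral evaluates to $\tr(T_1)\tr(T_2)$, but for general $T_i\in\S^1$ one must handle non-self-adjointness via the singular value decomposition and carefully track the phases arising from the cocycle identity \eqref{proj}. Everything else is soft, resting on Hölder and interpolation.
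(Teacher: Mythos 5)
Your proof is correct. Note, however, that the paper does not actually prove Proposition \ref{youngw}: it is quoted from the literature, with references to Werner's original paper and to Luef--Skrettingland, so there is no in-text argument to compare against. Your route --- the two $L^\infty$ endpoints from H\"older's inequality for Schatten classes and unitary invariance, the $L^1\times L^1\to L^1$ endpoint from the singular value decomposition together with the Moyal orthogonality relation $\int_{\RR^{2d}}|(f,\rho(w)g)|^2\,dw=\|f\|_2^2\|g\|_2^2$ (which holds with constant $1$ in the paper's normalization of $\rho$), followed by two successive applications of complex interpolation in each variable separately --- is essentially the standard proof one finds in those references, and all the details check out: the phases really are absent since $\rho(w)^*=\rho(-w)$ exactly, the index bookkeeping in Step 3 recovers $1/p+1/q=1+1/r$ with an admissible $\theta\in[0,1]$ because the relation forces $r\ge p$, and the absolute convergence $\sum_{j,k}\sigma_j^{(1)}\sigma_k^{(2)}<\infty$ legitimizes the term-by-term integration in the $L^1$ endpoint. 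Your self-contained argument is a useful supplement to the bare citation, particularly since the $L^1$ endpoint is the only place where something beyond soft functional analysis is needed.
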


\medskip
Let us compute $T_1\star T_2$  for $T_j=\rho(F_j), j=1,2,$ where $F_1,F_2\in \S(\RR^{2d}):$

\smallskip
Note to this end that for $w\in \RR^{2d},$  $\rho(w)=\rho(\delta_w),$ where $\delta_w$ denotes the point measure at $w.$ Thus, by \eqref{rhotwist},
$T_1\rho(w)=\rho(F_1\times \delta_w),$ where $F_1\times \delta_w(z)=F_1(z-w)e^{\pi i \si(z,w)}.$ 
Then one easily computes that 
$$
\delta_{-w}\times F_1\times\delta_w(z)=F_1(z) e^{2\pi i \si(z,w)}.
$$
Moreover, one checks that $P\rho(z)=\rho(-z) P,$
which implies that 
$$
PT_2=P\rho(F_2)=\rho(\check F_2)P,
$$
hence
$$
PT_2P=\rho(\check F_2),
$$
where $\check F_2(z):=F_2(-z).$ Thus,  finally
\begin{eqnarray*}
\delta_{-w}\times F_1\times\delta_w\times \check F_2(0)&=&\int F_1(z') e^{2\pi i \si(z',w)}\check F_2(0-z')e^{\pi i \si(z',0-z')}dz'\\
&=&\int (F_1F_2)(z')e^{2\pi i \si(z',w)} dz'=\F_\si(F_1F_2)(w).
\end{eqnarray*}
In combination with \eqref{trace}, we eventually find that for $F_1,F_2\in \S(\RR^{2d}),$
\begin{equation}\label{star}
\rho(F_1)\star \rho(F_2)=\F_\si(F_1F_2)=\F_\si(F_1)*\F_\si(F_2),
\end{equation}
where $*$ denotes here the Euclidean convolution on $\RR^{2d}.$ From this identity (and Lemma \ref{rhoSdense}), it becomes also evident why  Werner's convolution is commutative.
\medskip

Finally, if $a\in\S(\RR^{2d}),$  we recall that the   {\it Weyl quantization} $L_a$ of the symbol $a$ is defined as the bounded linear operator 
 $$
 L_a:=\int_{\RR^{2d}} \F_\si(a)(z)\rho(z) dz=\rho(\F_\si(a))\in \B(L^2(\RR^d)).
 $$
 \smallskip
 
Again, by \eqref{ku}, the definition of $L_a$ extends to tempered distributions:  if $a\in\S'(\RR^{2d}),$ then $L_a$ 
is the continuous linear operator 
$L_a:\S(\RR^d)\to \S'(\RR^d),$ whose 
 Schwartz kernel  $K_a$ is given by the tempered distribution 
$K_a(t,x):=k_{\F_\si(a)} (t,x).$ One then easily computes from \eqref{ku} that
\begin{equation}\label{Ka}
K_a(t,x)=(\F^\xi a)\big(\frac {x+t}2, x-t\big).
\end{equation}

\begin{remark}\label{FoLSa}
If we denote by $\tilde \pi_1$ and $\tilde\rho$ the analogues of the Schr\"odinger representation $\pi_1$ and $\rho$ as defined by Folland in \cite{Fo}, then one easily checks that $\tilde \pi_1(Jz,u)=\pi_1(z,u),$ and thus $\tilde \rho(Jz)=\rho(z).$ Since $J$ is symplectic, the mapping $(z,u)\mapsto (Jz,u)$ is an automorphism of the Heisenberg group which fixes its center, so that, by the Stone- von Neumann theorem, the representations $\tilde \pi_1$ and $\pi_1$ are unitarily equivalent, and the same is true of the projective representations $\tilde\rho$ and $\rho.$ Moreover, we have
$$
L_a=\iint\hat a(x,\xi)\tilde\rho(x,\xi) dxd\xi=\iint\F_\si a(x,\xi)\rho(x,\xi) dxd\xi.
$$
This allows, if we which, to easily switch between the notation in \cite{Fo} and the one in \cite{LSa} (which we are using here).
\end{remark}

 A crucial tool for us will be the theorem of Calder\'on-Vaillancourt (\cite[Theorem 2.73]{Fo}):
 \begin{thm}[Calder\'on-Vaillancourt]\label{CaVa}
Suppose $a$ is of class $C^{2d+1}$ on $\RR^{2d},$ and  that
$$
\|a\|_{C^{2d+1}}:=\sum\limits_{|\al|+|\beta|\le 2d+1}\|\pa_x^\al \pa_\xi^\beta a\|_\infty <\infty.
$$
Then $L_a$ is bounded on $L^2(\RR^d).$ Moreover, there exists a constant $C>0$ such that 
$$
\|L_a\|\le C\|a\|_{C^{2d+1}}
$$
whenever $\|a\|_{C^{2d+1}}<\infty.$
\end{thm}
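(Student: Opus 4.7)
The plan is the classical Cotlar--Stein almost-orthogonality argument applied to a phase-space decomposition of the symbol.

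First, I fix a nonnegative $\chi\in C_c^\infty(\RR^{2d})$ with support in $B(0,1)$ such that $\sum_{k\in\ZZ^{2d}}\chi(\cdot-k)\equiv 1$, and decompose $a=\sum_k a_k$ with $a_k:=a\,\chi(\cdot-k)$. Each $a_k$ has support in $B(k,1)$ and $\|a_k\|_{C^{2d+1}}\lesssim\|a\|_{C^{2d+1}}$ uniformly in $k$, so formally $L_a=\sum_k L_{a_k}$. The aim is to apply the Cotlar--Stein lemma to the family $\{L_{a_k}\}_{k\in\ZZ^{2d}}$, which will produce both strong-operator convergence of this sum and the desired bound on its operator norm.

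Second, to implement Cotlar--Stein I need to control the cross products $L_{a_j}^*L_{a_k}=L_{\bar a_j}L_{a_k}$ and $L_{a_j}L_{a_k}^*=L_{a_j}L_{\bar a_k}$. Here I would invoke the Weyl composition law $L_pL_q = L_{p\sharp q}$, where $p\sharp q$ is the Moyal product given by an oscillatory integral with phase $e^{\pi i\sigma(\cdot,\cdot)}$ (equivalently, $\F_\si(p\sharp q) = \F_\si p\times\F_\si q$, the twisted convolution, tied to the identity $\rho(F)\rho(G)=\rho(F\times G)$ from \eqref{rhotwist}). When $|j-k|$ is large, the supports of $\bar a_j$ and $a_k$ lie at distance $\sim|j-k|$ apart, which forces the integration variables into a non-stationary region where the gradient of the phase is of magnitude $\sim|j-k|$. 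Iterated integration by parts consuming up to $2d+1$ derivatives of the symbols then yields pointwise decay of $\bar a_j\sharp a_k$ by $(1+|j-k|)^{-(2d+1)}\|a\|_{C^{2d+1}}^2$, together with support localization of $\bar a_j\sharp a_k$ in a tube around the segment $[j,k]$. Converting this to an operator-norm bound on $L_{\bar a_j\sharp a_k}$ can be done via a Schur-type estimate applied to the Schwartz kernel $K_{\bar a_j\sharp a_k}$ given by \eqref{Ka}.

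Third, the delicate point, and the main technical obstacle, is that a naive application of Cotlar--Stein over $\ZZ^{2d}$ would seem to require operator-norm decay beyond $(1+|j-k|)^{-4d}$, which $2d+1$ derivatives cannot deliver by integration by parts alone. The resolution combines two observations. On the one hand, Cotlar--Stein only asks for summability of $\|T_j^*T_k\|^{1/2}$, so the pointwise decay from non-stationary phase, boosted by the support localization of the composition symbol, can be made just barely summable. On the other hand, one establishes an individual bound $\|L_{a_k}\|\lesssim\|a\|_{C^{2d+1}}$ by expanding the compactly supported $a_k$ in a Fourier series on a fixed torus containing its support: each Fourier mode $e^{2\pi i(m\cdot x + n\cdot\xi)}$ quantizes (up to a phase) to a \emph{unitary} Weyl translation of the form $\rho(z)$, and the Fourier coefficients are tamed in $\ell^1$ precisely by $2d+1$ derivatives of the symbol (since $2d+1>2d=\dim\RR^{2d}$). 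Combining these two ingredients with Cotlar--Stein produces strong-operator convergence of $\sum_k L_{a_k}$ on $\S(\RR^d)$ and the bound $\|L_a\|\leq C\|a\|_{C^{2d+1}}$, completing the proof.
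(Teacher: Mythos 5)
You should first be aware that the paper offers no proof of this statement: it is quoted verbatim as \cite[Theorem 2.73]{Fo} and used as a black box. So you are supplying a proof from scratch, and the only question is whether your argument actually delivers the theorem as stated, with the derivative count $2d+1$.

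Your architecture is the classical Calder\'on--Vaillancourt one, and most of it is sound: the unit-cube partition of phase space, the individual bound $\|L_{a_k}\|\lesssim\|a\|_{C^{2d+1}}$ via Fourier series on a torus (each mode quantizing to a scalar multiple of a unitary $\rho(z)$, with coefficients in $\ell^1$ because $2d+1>2d$), and the non-stationary-phase estimates on the Moyal product $\bar a_j\sharp a_k$ are all correct in outline. The gap is exactly at the point you flag in your third step, and your proposed repair does not close it. Concretely: integration by parts with a total of $m$ derivatives of the symbols yields at best $\|L_{a_j}^*L_{a_k}\|\lesssim(1+|j-k|)^{-m}\|a\|_{C^m}^2$ (and in fact less, since converting the pointwise bound on the composed symbol into an operator-norm bound by Schur's test on the kernel \eqref{Ka} consumes further $\xi$-derivatives to make the kernel integrable in $x-t$). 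Cotlar--Stein over the index set $\ZZ^{2d}$ requires $\sum_{l\in\ZZ^{2d}}(1+|l|)^{-m/2}<\infty$, i.e.\ $m>4d$, so $m=2d+1$ fails for every $d\ge1$. The appeal to ``support localization'' of $\bar a_j\sharp a_k$ cannot rescue this: the Moyal product of compactly supported symbols is not compactly supported, and its concentration near the segment $[j,k]$ does not improve the operator-norm decay beyond what the pointwise decay already gives. As written, your argument proves the theorem with $C^{2d+1}$ replaced by roughly $C^{4d+1}$. That weaker statement would still suffice for the qualitative results of this paper (only the exponents $4d+2$ and $5d+2$ in Theorems \ref{Taest} and \ref{schattenests} would degrade), but it is not the theorem as stated. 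To reach the sharp count you need a genuinely different mechanism at this step --- e.g.\ Hwang's direct Cauchy--Schwarz/integration-by-parts argument on the Schwartz kernel, or a comparison with the anti-Wick quantization (which is trivially bounded by $\|a\|_\infty$ as an average of coherent-state projections), rather than brute-force almost orthogonality over $\ZZ^{2d}$. Either supply such an argument or weaken the stated derivative count and propagate the change through the constants.
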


\section{Auxiliary results}\label{aux}

Our first auxiliary result is well-known, at least as folklore, and I shall only give a brief sketch of the proof.
First, we recall some well-known facts about Hermite functions (compare \cite[Ch. 7]{Fo}, but note that Folland works in coordinates $y$ on $\RR^n$ which are related to the coordinates $x$ used here by $x=\sqrt{2\pi}y$): let 
$$
\H_1:=-\frac {d^2}{dx^2}+x^2
$$
denote the {\it Hermite operator} on $\RR,$ and let 
\begin{equation}\label{hermite}
 \tilde h_k(x):=(-1)^k e^{x^2/2} \frac {d^k}{dx^k} e^{-x^2}=H_k(x)\, e^{-x^2/2},\qquad k\in\NN,
\end{equation}
denote the k-th classical {\it Hermite function} ($H_k$ denoting the Hermite polynomial of degree $k$).
We normalize these Hermite functions  by putting 
$$
h_k:=\frac 1{\sqrt{2^k k! \sqrt{\pi}}}\, \tilde h_k, \qquad k\in \NN.
$$ 
 Then the family $\{h_k\}_{k\in \NN}$ forms a countable complete   orthonormal  basis of the Hilbert space  $L^2(\RR),$ consisting of eigenfunctions of the Hermite operator:
\begin{equation}\label{EVs}
\H_1h_k=(2k+1) h_k, \qquad k\in\NN.
\end{equation}
Similarly, on $\RR^n$ we can consider the family 
$$
h_{\kb}(x):=h_{k_1}(x_1)\cdots h_{k_n}(x_n), \qquad \kb=(k_1,\dots,k_n)\in\NN^n.
$$
Again, these higher-dimensional Hermite functions form a complete  orthonormal  basis of the Hilbert space  $L^2(\RR^n), $ each of them being an eigenfunction of the $n$-dimensional Hermite operator
 $
\H_n:=\sum\limits_{j=1}^n(-\frac {\pa^2} {{\pa x_j}^2}+x_j^2)=-\Delta+|x|^2:
$
\begin{equation}\label{eigenHF}
\H_n h_\kb=(n+2|\kb|)h_\kb,  \qquad \qquad \kb\in\NN^n.
\end{equation}
We remark that, at the same time, the  $h_\kb$'s are eigenfunctions of the Euclidean Fourier transform, if we define the Fourier transform in this section by $\hat f(\xi):=\int f(x) e^{-i\xi x} dx:$
$$
\widehat  h_\kb= (-i)^{|\kb|} \, h_\kb
$$
(note that if we express the $h_\kb$ in the coordinates $y=(2\pi)^{-1/2}x$ used by Folland, they become eigenfunctions with respect to the Fourier transform as defined in the previous sections).
\smallskip

If $f\in L^2(\RR^n),$ we shall call
$$
\breve f(\kb):=(f, h_\kb)
$$
the $\kb$-th {\it Hermite coefficient} of $f.$ If $\vp\in \S(\RR^n),$ then we define its Schwartz-norm $\|\vp\|_{(N,2)}$ by
$$
\|\vp\|_{(N,2)}:=\sum\limits_{|\al|+|\beta|\le N}\|\pa^\al(x^\beta\vp)\|_{L^2(\RR^n)}.
$$

\begin{lemma}\label{charS}
If $\vp\in\S=\S(\RR^n)$ is a  Schwartz-funktion,  then its Fourier-Hermite-transform $\breve\vp:\kb\mapsto \breve\vp(\kb)$ is rapidly decaying on $\NN^n.$ 
More precisely,  for every $N\in \NN,$ 
\begin{equation}\label{rapid}
|\breve \vp(\kb)|\le C_N(n+|\kb|)^{-N}\|\vp\|_{(2N,2)}.
\end{equation}
 Moreover, the  expansion 
\begin{equation}\label{expansionS}
\vp=\sum\limits_{\kb\in\NN^n} \breve\vp(\kb) h_{\kb}
\end{equation}
of $\vp$ with respect to the Hermite basis is convergent with respect to the topology of   the Fr\'echet space $\S(\RR^n).$
\end{lemma}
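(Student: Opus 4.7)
My plan is two-part: first establish the coefficient bound \eqref{rapid} by iterated integration by parts against the Hermite operator, then deduce the Schwartz convergence of the expansion from that bound combined with a polynomial control on the Schwartz seminorms of the $h_\kb$'s themselves.

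\textbf{Decay of Hermite coefficients.} The Hermite operator $\H_n=-\Delta+|x|^2$ is symmetric on $\S(\RR^n)$ (integration by parts), and $\H_n h_\kb=(n+2|\kb|)h_\kb$ by \eqref{eigenHF}. Iterating $N$ times,
$$
\breve\vp(\kb)=(\vp,h_\kb)=(n+2|\kb|)^{-N}(\vp,\H_n^N h_\kb)=(n+2|\kb|)^{-N}(\H_n^N\vp,h_\kb),
$$
so Cauchy–Schwarz and $\|h_\kb\|_{L^2}=1$ give $|\breve\vp(\kb)|\le(n+2|\kb|)^{-N}\|\H_n^N\vp\|_{L^2}$. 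It then suffices to bound $\|\H_n^N\vp\|_{L^2}\le C_{N,n}\|\vp\|_{(2N,2)}$, which I would do by expanding $(-\Delta+|x|^2)^N$ as a sum of monomials in $\pa_j$ and $x_k$, commuting factors using $[\pa_j,x_k]=\delta_{jk}$, and collecting terms into a linear combination of expressions $\pa^\al(x^\beta\vp)$ with $|\al|+|\beta|\le 2N$.

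\textbf{Polynomial growth of Schwartz seminorms of $h_\kb$.} For the convergence of \eqref{expansionS} in $\S(\RR^n)$ I need a companion bound of the form $\|h_\kb\|_{(M,2)}\le C_{M,n}(1+|\kb|)^M$ for every $M\in\NN$. This is the main technical step, and I would obtain it from the ladder-operator formalism: the annihilation and creation operators $A_j:=\pa_{x_j}+x_j$ and $A_j^*:=-\pa_{x_j}+x_j$ act as $A_jh_\kb=c_j^-(\kb)\,h_{\kb-e_j}$ and $A_j^*h_\kb=c_j^+(\kb)\,h_{\kb+e_j}$, with coefficients satisfying $|c_j^\pm(\kb)|\lesssim(1+|\kb|)^{1/2}$. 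Since $\pa_{x_j}=(A_j-A_j^*)/2$ and $x_j=(A_j+A_j^*)/2$, any monomial $\pa^\al x^\beta$ applied to $h_\kb$ is a sum of at most $2^{|\al|+|\beta|}$ terms of the form $c\,h_{\kb'}$ with $|\kb'-\kb|\le|\al|+|\beta|$ and $|c|\lesssim(1+|\kb|)^{(|\al|+|\beta|)/2}$; orthonormality of the basis then gives the desired bound.

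\textbf{Assembling the two estimates.} For any fixed $M$, choose $N>M+n$. Combining \eqref{rapid} with the basis bound,
$$
\sum_{\kb\in\NN^n}|\breve\vp(\kb)|\,\|h_\kb\|_{(M,2)}\le C_{N,M,n}\,\|\vp\|_{(2N,2)}\sum_{\kb\in\NN^n}(1+|\kb|)^{M-N}<\infty.
$$
Hence the partial sums of $\sum_\kb\breve\vp(\kb)h_\kb$ form a Cauchy sequence in every Schwartz seminorm and converge to some $\psi\in\S(\RR^n)$. Since the $h_\kb$ form an orthonormal basis of $L^2(\RR^n)$, the partial sums also converge to $\vp$ in $L^2$, forcing $\psi=\vp$ and yielding \eqref{expansionS}. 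The main obstacle I anticipate is the polynomial Schwartz-seminorm bound for the Hermite basis; once that is in hand, everything else reduces to a routine integration-by-parts and summation.
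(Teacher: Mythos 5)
Your proposal is correct and follows essentially the same route as the paper: the coefficient bound comes from iterating $\H_n$ through the inner product exactly as in the text, and your ladder-operator bound on $\|h_\kb\|_{(M,2)}$ is just a detailed version of the polynomial seminorm growth that the paper derives from the classical Hermite recurrence relations. You have merely filled in the steps the paper leaves as sketches.
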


\begin{proof} By \eqref{eigenHF}, we have that for every $N\in \NN$ 
\begin{eqnarray*}
\breve \vp(\kb)=(\vp, h_\kb)=(n+2|\kb|)^{-N} (\vp,\H_n^N h_\kb)=(n+2|\kb|)^{-N} (\H_n^N \vp,h_\kb),
\end{eqnarray*}
 so that 
 $$
 |\breve \vp(\kb)|\le(n+2|\kb|)^{-N} \|\H_n^N \vp\|_2.
 $$ 
This easily implies  \eqref{rapid}.
  \smallskip
  
 With a little more effort one can also show that for every continuous semi-norm $p$ on $\S,$  $p(h_{\kb})$ will only grow polynomially in $\kb,$ i.e., 
 $$
p(h_\kb)\le C_p(n+2|\kb|)^{N_p}\qquad \text{for every} \ \kb\in\NN^n,
$$
for suitable constants $C_p$ and $N_p.$ This follows indeed easily by induction on $|\kb|$  by means of  the classical recurrence relations for Hermite functions (compare also the identities (1.82) in \cite{Fo}).
The proof of the lemma now follows from these estimates.
\end{proof}

\begin{lemma}\label{S1est}
If $\vp\in\S(\RR^{2d}),$ then $L_\vp\in \S^1,$ and 
\begin{equation}\label{s1esti}
\|L_\vp\|_{\S^1}\le C \|\F_\si(\vp)\|_{(4d+2,2)}.
\end{equation}

\end{lemma}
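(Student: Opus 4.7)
The plan is to expand the Schwartz kernel of $L_\vp$ in the tensor-product Hermite basis on $\RR^d\times\RR^d$, thereby writing $L_\vp$ as an absolutely convergent series of rank-one operators in $\S^1$. Setting $u:=\F_\si\vp$, so that $L_\vp=\rho(u)$ has Schwartz kernel $k_u\in\S(\RR^d\times\RR^d)$ given by \eqref{ku}, the first observation is that the map $\cP:u\mapsto k_u$, being the composition of the partial Fourier transform $\F^\xi$ with an invertible linear change of variables on $\RR^{2d}$, preserves each Schwartz seminorm $\|\cdot\|_{(N,2)}$ up to constants; consequently
$$
\|k_u\|_{(N,2)}\,\lesssim\,\|u\|_{(N,2)}\,=\,\|\F_\si\vp\|_{(N,2)}\qquad\text{for every }N\in\NN.
$$

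For the main argument, identify an index $\kb\in\NN^{2d}$ with a pair $(\jb,\kb')\in\NN^d\times\NN^d$, so that $h_\kb(t,x)=h_\jb(t)\,h_{\kb'}(x)$ lies in the tensor-product Hermite basis of $L^2(\RR^d\times\RR^d)$; the integral operator with kernel $h_\jb\otimes h_{\kb'}$ is the rank-one map $f\mapsto (f,h_{\kb'})\,h_\jb$, whose $\S^1$-norm equals $\|h_\jb\|_2\|h_{\kb'}\|_2=1$. Expanding
$$
k_u\,=\,\sum_{\jb,\kb'\in\NN^d}b_{\jb,\kb'}\,h_\jb\otimes h_{\kb'},\qquad b_{\jb,\kb'}\,:=\,(k_u,\,h_\jb\otimes h_{\kb'}),
$$
the corresponding operator series converges absolutely in $\S^1$ to $L_\vp$ as soon as $\sum|b_{\jb,\kb'}|<\infty$ (the limit must coincide with $L_\vp$ since the series also converges in $\S^2$ to the Hilbert--Schmidt operator with kernel $k_u$), and in that case $\|L_\vp\|_{\S^1}\le\sum_{\jb,\kb'}|b_{\jb,\kb'}|$.

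Finally, applying Lemma \ref{charS} to $k_u\in\S(\RR^{2d})$ yields $|b_{\jb,\kb'}|\le C_N(2d+|\jb|+|\kb'|)^{-N}\,\|k_u\|_{(2N,2)}$ for every $N\in\NN$. Since the number of pairs $(\jb,\kb')\in\NN^{2d}$ with $|\jb|+|\kb'|=m$ grows polynomially of order $m^{2d-1}$, the counting sum $\sum(2d+|\jb|+|\kb'|)^{-N}$ converges precisely when $N>2d$; taking the minimal admissible value $N=2d+1$ produces the Schwartz seminorm of order $2N=4d+2$, and combined with the first step yields \eqref{s1esti}. The only step that needs genuine care is the seminorm bound $\|k_u\|_{(N,2)}\lesssim\|u\|_{(N,2)}$ from the first paragraph, since the exponent $4d+2$ comes out correctly only if $\cP$ preserves each $\|\cdot\|_{(N,2)}$ without loss of derivative order.
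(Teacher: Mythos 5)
Your proof is correct and follows essentially the same route as the paper: expand the Schwartz kernel $K_\vp=k_{\F_\si\vp}$ in the tensor Hermite basis, bound the coefficients by Lemma \ref{charS} with $N=2d+1$, and sum the $\S^1$-norms of the rank-one pieces; the seminorm comparison $\|k_u\|_{(N,2)}\simeq\|\F_\si\vp\|_{(N,2)}$ that you flag as the delicate point is exactly the paper's estimate \eqref{equiv1}, established there via the invertible linear change of variables and Plancherel, with no loss of derivative order.
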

\begin{proof} 
By \eqref{Ka},   the integral kernel of $L_\vp$ is given by
\begin{equation}\label{Kphi}
K_\vp(t,x):=k_{\F_\si\vp}(t,x)=(\F^\xi \vp)(\frac {x+t}2, x-t), \qquad t,x\in \RR^d.
\end{equation}

If we regard $K_\vp$ as a function on $\RR^{2d},$ then it is easily seen that
\begin{equation}\label{equiv1}
\|K_\vp\|_{(N,2)}\simeq \|\vp\|_{(N,2)}\simeq \|\F_\si(\vp)\|_{(N,2)}.
\end{equation}
Indeed, since the linear change of variables $(t,x)\mapsto (\frac {x+t}2, x-t)$ is invertible, we first observe that 
$\|K_\vp\|_{(N,2)}\simeq \|\F^\xi (\vp)\|_{(N,2)}.$ Next, by Plancherel's theorem (on $\RR^d$), we find that 
$\|\F^\xi (\vp)\|_{(N,2)}\simeq \|\vp\|_{(N,2)},$ and then, by rotation through the matrix $J$ and Plancherel's theorem  on $\RR^{2d}$  finally that
$\|\vp\|_{(N,2)}\simeq \|\F_\si(\vp)\|_{(N,2)}.$
\medskip

Next, according to Lemma \ref{charS}, we may expand 
\begin{equation}\label{kphixp}
K_\vp(t,x)=\sum\limits_{\jb,\kb\in\NN^d} \breve K_\vp(\jb,\kb) h_{\jb}(t)h_{\kb}(x),
\end{equation}
where, for every $N\in\NN,$
\begin{equation}\label{rapidK}
|\breve K_\vp(\jb,\kb)|\le C_N(2d+2|\jb|+2|\kb|)^{-N}\|K_\vp\|_{(2N,2)}.
\end{equation}
But, if $N\ge 2d+1,$ then $\sum \limits_{(\jb,\kb)\in\NN^{2d}}(2d+2|\jb|+2|\kb|)^{-N}<\infty.$
And, if we identify $h_{\jb}\otimes h_{\kb}$ with the one-dimensional projector $T_{h_{\jb}\otimes h_{\kb}}:g\mapsto (g,h_{\kb})h_{\jb},$ then \eqref{kphixp} shows that 
$$
L_\vp=\sum\limits_{\jb,\kb\in\NN^d} \breve K_\vp(\jb,\kb) T_{h_{\jb}\otimes h_{\kb}}.
$$ 
Since  $T_{h_{\jb}\otimes h_{\kb}}$ is a one-dimensional projector of trace-class norm  $\|T_{h_{\jb}\otimes h_{\kb}}\|_{\S^1}=1,$ in view of \eqref{equiv1} we may then estimate
\begin{eqnarray*}
\|L_\vp\|_{\S^1}\le\sum\limits_{\jb,\kb\in\NN^d} |\breve K_\vp(\jb,\kb)|\le C \|K_\vp\|_{(4d+2,2)}\simeq \|\F_\si(\vp)\|_{(4d+2,2)}.
\end{eqnarray*}
\end{proof}

If $a\in\S'(\RR^{2d}),$ then we say that $L_a\in \S^p,$ if $L_a$ maps $\S(\RR^d)$ continuously into $L^2(\RR^d)$ with respect to the $L^2$-norm, and if in addition its continuous extension to a bounded  linear operator on $\S(\RR^d)$ is compact and lies in $\S^p.$ 

Moreover, if $F$ is a function (or distribution)  on $\RR^{2d},$ and if $w\in \RR^{2d}$, then we write $(\la_wF)(z):=F(z-w)$ for the  translation of $F$ by $w.$ 

\begin{lemma}\label{Sptrans}
Let  $1\le p\le\infty,$  and suppose  $a\in\S'(\RR^{2d})$  is such that $L_a\in \S^p.$ Then  for every $w\in \RR^{2d},$ we have 
\begin{equation}\label{law}
\|L_{\la_w a}\|_{\S^p}=\|L_{a}\|_{\S^p}.
\end{equation}
\end{lemma}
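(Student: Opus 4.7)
The plan is to establish the \emph{symplectic covariance} of the Weyl calculus in the form
\[
L_{\la_w a} \;=\; \rho(w)^*\, L_a\, \rho(w) \qquad (w \in \RR^{2d}).
\]
Once this identity is in hand, the lemma is immediate: since $\rho(w)$ is unitary we have $\|\rho(w)\| = \|\rho(w)^*\| = 1$, so the two-sided-ideal estimate \eqref{ATB} gives $\|L_{\la_w a}\|_{\S^p} \le \|L_a\|_{\S^p}$; applying the same identity with $w$ replaced by $-w$ and using $\la_{-w}\la_w a = a$ yields the reverse inequality.

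To prove the covariance identity I would first treat the case $a \in \S(\RR^{2d})$, where everything converges absolutely. A change of variables in the defining integral of the symplectic Fourier transform gives
\[
\F_\si(\la_w a)(z) \;=\; e^{-2\pi i \si(z,w)}\, \F_\si a(z),
\]
and two applications of the projective relation \eqref{proj} (together with $\rho(w)^* = \rho(-w)$) produce the Heisenberg commutation
\[
\rho(w)^*\,\rho(z)\,\rho(w) \;=\; e^{-2\pi i \si(z,w)}\,\rho(z).
\]
Substituting both formulas into $L_{\la_w a} = \int \F_\si(\la_w a)(z)\,\rho(z)\,dz$ and pulling the two unitaries $\rho(w)^*, \rho(w)$ outside the Bochner integral proves the covariance identity on Schwartz symbols.

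To extend the identity to a general $a \in \S'(\RR^{2d})$ with $L_a \in \S^p$, I would verify it on the level of Schwartz kernels via \eqref{Ka}. Writing $w = (w_1,w_2)$, a short distributional calculation gives
\[
K_{\la_w a}(t,x) \;=\; e^{-2\pi i (x - t) w_2}\, K_a(t - w_1,\, x - w_1),
\]
and the same expression is produced by substituting the explicit action $[\rho(w)g](t) = e^{-\pi i w_1 w_2 + 2\pi i t w_2}\,g(t-w_1)$ into the composition $\rho(w)^* L_a \rho(w)$. Since both sides agree as tempered distributions on $\RR^d \times \RR^d$, the two continuous operators $\S(\RR^d) \to \S'(\RR^d)$ coincide; the hypothesis $L_a \in \S^p$ then forces this equality to hold in $\S^p$.

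The only genuine obstacle is the careful bookkeeping of the phase factors coming from the various Fourier, symplectic, and Heisenberg conventions in play; beyond these sign-chasings no analytic estimates are needed, and the argument is essentially algebraic.
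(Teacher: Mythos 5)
Your proof is correct and follows essentially the same route as the paper: the paper splits $w$ into its position and frequency components and conjugates $L_a$ separately by a translation and a modulation unitary (verified on kernels via \eqref{Ka}), whereas you package both into the single covariance identity under conjugation by $\rho(w)$ and then invoke \eqref{ATB} — in both versions the point is that $L_{\la_w a}$ is unitarily equivalent to $L_a$, so the Schatten norms agree. One minor remark: with the paper's explicit formula for $\rho$ the identity comes out as $L_{\la_w a}=\rho(w)\,L_a\,\rho(w)^{*}$ rather than $\rho(w)^{*}L_a\,\rho(w)$ (your kernel formula for $K_{\la_w a}$ is the correct one and matches the former), but since either way it is a unitary conjugation, this orientation issue is immaterial to the conclusion.
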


\begin{proof} 
To prove \eqref{law}, assume first that $w=(y,0)\in\RR^d\times \RR^d.$ Then by \eqref{Ka} (again in the sense of distributions),
$$
K_{\la_w a}(t,x)=(\F^\xi a)(\frac {x+t}2-y, x-t), 
$$
so that $K_{\la_w a}(t+y,x+y)=K_a(t,x),$ i.e.,
$$
L_{\la_w a}=\la_y\circ L_a\circ \la_{-y},
$$
where  the operator $\la_y$ of translation by $y$ is unitary on  $L^2(\RR^d).$  
\smallskip

But, if $T\in \S^p,$ and  if $U$ is any unitary operator on $L^2(\RR^d),$
then it is well-known that 
$$
\|UTU^{-1}\|_{\S^p}=\|T\|_{\S^p}.
$$ 
Here is a brief sketch  of a proof: Let $A:=UTU^{-1}.$ Then clearly $|A|^2=A^*A=UT^* TU^{-1}=U|T|^2U^{-1},$ and thus by standard functional calculus for self-adjoint  operators, we have that $|A|^p=U|T|^p U^{-1}.$ This immediately implies the preceding identity, since 
$\|T\|^p_{\S^p}=\tr |T|^p.$

\smallskip
We conclude in particular that \eqref{law} holds for $w=(y,0).$
\smallskip

It remains to show the same for $w=(0,\eta)\in\RR^d\times \RR^d.$ But, here 
$$
K_{\la_w a}(t,x)=e^{2\pi i \eta t}K_a(t,x) e^{-2\pi i \eta x,}
$$
hence 
$$
L_{\la_w a}=U L_a U^{-1},
$$
where $U$ denotes the unitary operator of multiplication with $e^{2\pi i \eta x}$ on  $L^2(\RR^d).$ Thus, \eqref{law} holds in this case as well.
\end{proof} 

If $u\in \S'(\RR^{2d}),$ and if  $1\le p\le \infty,$ then (with a slight abuse of notation)  we shall say that $\rho(u)$ lies in the Schatten class $S^p,$ if the operator $\rho(u)$ maps $\S(\RR^d)$ boundedly  into $L^2(\RR^d)$ with respect to the $L^2$-norm in such a way that its unique extension to a bounded linear operator on $L^2(\RR^d)$ lies in $S^p.$

Conversely, note that  if $T\in S^\infty=\B(L^2(\RR^{d})),$ then $T$ maps the space $\S(\RR^d)$ continuously into $\S'(\RR^d),$ and thus, by the Schwartz kernel theorem, there exists a unique tempered distribution $k\in\S(\RR^{2d})$ so that 
$$
\langle T\psi,\vp\rangle= \langle k,\vp\otimes\psi \rangle\qquad \text{for all} \ \vp,\psi\in \S(\RR^d).
$$
Consequently,  there is a unique tempered distribution $u=u_T$ in $\S'(\RR^{2d})$ so that $k=k_u,$ hence 
$T=\rho(u),$ namely $u:=\cP^{-1}(k),$ with $\cP$ as defined in \eqref{Pdef}.
\smallskip

\begin{lemma}\label{wernerg}
Let $T=\rho(u) \in S^\infty,$  where $u=u_T\in\S'(\RR^{2d})$ is defined as before.  Then, for every $\phi\in\S(\RR^{2d}),$ the following analogue of identity \eqref{star} holds true pointwise:
 \begin{equation}\label{star2}
T\star\rho(\phi)=\rho(u)\star \rho(\phi)=\F_\si(\phi u)=\F_\si(u)*\F_\si(\phi).
\end{equation}
 \end{lemma}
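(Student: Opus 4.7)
The proof will be a direct pointwise computation, paralleling the Schwartz-function derivation that led to \eqref{star}. Starting from the very definition of Werner's convolution and invoking cyclicity of the trace, one writes
$$T \star \rho(\phi)(w) \;=\; \tr\bigl[\rho(-w)\,T\,\rho(w)\,P\rho(\phi)P\bigr] \;=\; \tr\bigl[T\cdot \rho(w)\,P\rho(\phi)P\,\rho(-w)\bigr].$$
The identity $P\rho(\phi)P = \rho(\check\phi)$ recalled before \eqref{star}, combined with \eqref{rhotwist}, identifies the second factor as $\rho(\psi_w)$, where $\psi_w := \delta_w\times\check\phi\times\delta_{-w}$. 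A direct twisted-convolution calculation of exactly the same type as the one preceding \eqref{star} gives
$$\psi_w(z) \;=\; \check\phi(z)\,e^{2\pi i\,\si(w,z)} \;\in\; \S(\RR^{2d}),$$
so that $\rho(\psi_w)\in\S^1$ by Lemma \ref{S1est}.

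The crux of the argument is then the trace identity
$$
\tr\bigl[T\rho(\psi)\bigr] \;=\; \langle u,\check\psi\rangle\qquad(\psi\in\S(\RR^{2d})),\qquad\qquad(\dagger)
$$
where the right-hand side is the $\S'\times\S$ duality pairing. I would establish $(\dagger)$ as follows: expanding the trace in the Hermite ONB $\{h_\kb\}$ of $L^2(\RR^d)$ and using the distributional resolution of the identity $\sum_\kb h_\kb(t)h_\kb(x)=\delta(t-x)$ produces the general formula $\tr(T\rho(\psi)) = \langle k_u,k_\psi^{\mathrm{T}}\rangle_{\S'\times\S}$ with $k^{\mathrm{T}}(t,s):=k(s,t)$; the swap of summation and distributional pairing is legitimate because, by Lemma \ref{charS}, the Hermite expansion of the Schwartz function $k_\psi^{\mathrm T}$ converges in $\S(\RR^{2d})$. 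A short check from \eqref{intker} shows that $(k_F)^{\mathrm T} = k_{P_xF}$, where $P_xF(x,\xi):=F(-x,\xi)$; combining with \eqref{cP} yields
$$\tr[T\rho(\psi)] \;=\; \langle u, P_\xi P_x\psi\rangle \;=\; \langle u,\check\psi\rangle.$$

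Applying $(\dagger)$ with $\psi=\psi_w$ and noting $\check\psi_w(z) = \psi_w(-z) = \phi(z)\,e^{-2\pi i\si(w,z)}$, one arrives at
$$T \star \rho(\phi)(w) \;=\; \langle u,\,\phi(\cdot)\,e^{-2\pi i\,\si(w,\cdot)}\rangle \;=\; \langle u\phi,\,e^{-2\pi i\,\si(w,\cdot)}\rangle.$$
To finish I would identify the last expression with $\F_\si(u\phi)(w)$, which in turn equals $(\F_\si u * \F_\si\phi)(w)$ --- a well-defined smooth function of $w$ since $\F_\si u\in\S'$ and $\F_\si\phi\in\S$. The pointwise identity $\F_\si(u\phi)(w) = \langle u\phi,e^{-2\pi i\si(w,\cdot)}\rangle$ is immediate for $u\in\S$, and extends to arbitrary $u\in\S'$ by the weak-$*$ density of $\S$ in $\S'$ together with the weak-$*$ continuity in $u$ of both sides (with $\phi$ and $w$ fixed).

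The main technical obstacle is the justification of $(\dagger)$: extending the standard trace formula from the $\S\times\S$ setting to $T=\rho(u)$ merely bounded on $L^2$, i.e.\ to $u$ only a tempered distribution. Once this trace identity is in hand, the remainder of the argument is a routine algebraic manipulation of identities already recorded in Section \ref{prelim}.
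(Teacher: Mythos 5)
Your proof is correct and follows essentially the same route as the paper: both arguments reduce, via the Hermite expansion of Lemma \ref{charS} and the trace-class bound of Lemma \ref{S1est}, to pairing $k_u$ against a transposed (rank-one) Schwartz kernel and then invoking \eqref{cP}. The only real difference is organizational: you use cyclicity of the trace to move the conjugation by $\rho(\pm w)$ onto the Schwartz factor $\rho(\check\phi)$, which lets you work with the single clean trace identity $(\dagger)$ and spares you the paper's step of extending $\rho(-w)\rho(u)\rho(w)=\rho(u\,e^{2\pi i\si(\cdot,w)})$ to tempered distributions by weak-$*$ density.
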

 Note here  that  $\F_\si(u)*\F_\si(\phi)$ is a continuous function, since $\F_\si(\phi)\in\S$ and  $\F_\si(u)\in\S'.$ Moreover, since 
 $\F_\si(u)*\F_\si(\phi)=\langle \F_\si(u),\la_w (\F_\si(\phi)\check)\rangle,$ one easily sees that 
 $$
 \F_\si(\phi u)(w)=\langle u e^{-2\pi i\si(w,\cdot)},\phi\rangle.
 $$
 \begin{proof} Since $\phi\in\S(\RR^{2d}),$ also $k_\phi=\cP(\phi)\in\S(\RR^{2d}).$   Arguing in a similar way as in the proof of Lemma \ref{S1est},  we may thus expand $k_\phi$ by means of Lemma \ref{charS} as 
\begin{equation}\label{kFexp}
k_\phi(t,x)=\sum\limits_{\jb,\kb\in\NN^d} \breve k_\phi(\jb,\kb) h_{\jb}(t)h_{\kb}(x),
\end{equation}
where, for any $N\in\NN,$ 
\begin{equation}\label{rapidk}
|\breve k_\phi(\jb,\kb)|\le C_N(2d+2|\jb|+2|\kb|)^{-N}\|k_\phi\|_{(2N,2)}.
\end{equation}
The series \eqref{kFexp} is convergent in the topology of $\S,$ so if we put 
$$
K_n:=\sum\limits_{\{(\jb,\kb): |\jb|+|\kb|\le n\}} \breve k_\phi(\jb,\kb) h_{\jb}\otimes h_{\kb},\qquad n\in\NN,
$$
then $k_\phi=\lim K_n$ in $\S.$ Correspondingly, if we set 
$$
\phi_n:=\sum\limits_{\{(\jb,\kb): |\jb|+|\kb|\le n\}} \breve k_\phi(\jb,\kb) \cP^{-1}(h_{\jb}\otimes h_{\kb})\in \S(\RR^{2d}),
$$
then $\phi=\lim\limits_{n\to \infty} \phi_n$ in $\S.$ Since $\F_\si(u)*\F_\si(\phi)=\langle \F_\si(u),\la_w (\F_\si(\phi)\check)\rangle,$ this implies that
\begin{equation}\label{FntoF}
\F_\si(u)*\F_\si(\phi)(w)=\lim\limits_{n\to \infty}\\F_\si(u)*\F_\si(\phi_n).
\end{equation}

Next, by  Lemma \ref{S1est}, $\rho(\phi)$ lies in the operator ideal $\S^1$ of $\B(L^2(\RR^d))=\S^\infty$,  and since 
 $T_{\|h_{\jb}\otimes h_{\kb}}\|_{\S^1}=1,$ \eqref{rapidk} also implies that 
$$
\|\rho(\phi)-\rho(\phi_n)\|_{S^1}\to 0 \quad {as}\ n\to \infty.
$$
Recalling that for any $w\in\RR^{2d},$
$$
T\star\rho(\phi)(w):=\tr[\rho(-w)T\rho(w)\, P\rho(\phi)P],
$$
we conclude that
\begin{equation}\label{FnTtoFT}
T\star\rho(\phi)(w)= \lim\limits_{n\to \infty} T\star\rho(\phi_n)(w).
\end{equation}

By \eqref{FntoF} and \eqref{FnTtoFT}, it will thus suffice  to prove \eqref{star2} for the case where $k_\phi=\phi_1\otimes \phi_2,$ i.e., $T=T_{\phi_1\otimes \phi_2},$ with $\phi_1,\phi_2\in\S(\RR^d)$ being real-valued Schwartz functions.
\smallskip

However, straight-forward computations show that for $f,g\in L^2(\RR^d),$ the Schwartz kernel of the operator $T_{f\otimes g}$ is given by 
\begin{equation}\label{utensor}
f\otimes g=k_{A(f,\overline g)},
\end{equation}
where 
\begin{equation}\label{crosFW}
A(f,g)(x,\xi):=\cP^{-1}(f\otimes g)(x,\xi)=(f,\rho(x,\xi)g)=\int_{\RR^d} f(t+\frac x2)\overline{g(t-\frac x2)} e^{-2\pi i\xi t}dt
\end{equation}
is what is called the {\it cross ambiguity function} of $f$ and $g$ in \cite{LSa} (in the coordinates chosen in \cite{Fo} it corresponds to what Folland denotes  as the  {\it Fourier-Wigner transform} $V(f,g)$ of $f$ and $g$). 

Note also that 
\begin{equation}\label{tracetensor}
\tr(T_{f\otimes g})=\int f(x)g(x) dx.
\end{equation}
 
In particular,  if $k_\phi=\phi_1\otimes\phi_2,$ with $\phi_1,\phi_2$ real-valued Schwartz functions, then 
$\phi=A(\phi_1,\phi_2).$ 

\smallskip

Recall next from Section \ref{intro} that for any Schwartz function $F\in \S(\RR^{2d})$ we have 
$$
\rho(-w)\rho(F)\rho(-w)=\rho(\de_{-w}\times F\times \de_w)=\rho(F e^{2\pi i\si(\cdot,w)}).
$$
Since $\S$ lies dense in $\S'$ with respect to the weak*- topology, this identity remains valid also for tempered distributions, so that we have in particular that 
$$
\rho(-w)\rho(u)\rho(-w)=\rho(\de_{-w}\times u\times \de_w)=\rho(u e^{2\pi i\si(\cdot,w)}).
$$
Note  that also $\rho(u e^{2\pi i\si(\cdot,w)})$ is thus a bounded operator.

Moreover, since $\rho(\phi)=T_{\phi_1\otimes \phi_2},$ one checks that  $P\rho(\phi)P=T_{\check \phi_1\otimes\check \phi_2},$ and thus 
\begin{eqnarray*}
T\star\rho(\phi)(w)&=&\tr[\rho(u e^{2\pi i\si(\cdot,w)}) \,T_{\check \phi_1\otimes\check \phi_2}]\\
&=&\tr[T_{(\rho(u e^{2\pi i\si(\cdot,w)}) \, \check \phi_1)\otimes\check \phi_2}]\\
&=& \int (\rho(u e^{2\pi i\si(\cdot,w)})\check \phi_1)(x)\, \check \phi_2(x)dx\\
&=& \langle k_{(u e^{2\pi i\si(\cdot,w)})}, \check \phi_2\otimes \check \phi_1)\rangle,
\end{eqnarray*}
where we have also used \eqref{tracetensor}.
But, one easily  computes  that 
$$
A(\check\phi_2,\check \phi_1)=P_\xi A(\phi_1,\phi_2),
$$
so that, in combination with \eqref{cP},
$$
T\star\rho(\phi)(w)=\langle k_{(u e^{2\pi i\si(\cdot,w)})}, k_{P_\xi A(\phi_1,\phi_2)}\rangle=\langle u e^{2\pi i\si(\cdot,w)}, A(\phi_1,\phi_2)\rangle,
$$
i.e.,
$$
T\star\rho(\phi)(w)=\langle u e^{-2\pi i\si(w,\cdot)},\phi\rangle=\F_\si(\phi u)(w).
$$
This proves \eqref{star2} for the case where   $T=T_{\phi_1\otimes \phi_2},$ and hence concludes the proof.
\end{proof}

The following corollary to Lemma \ref{wernerg} is immediate by Young's inequality in Proposition \ref{youngw} for Werner's convolution product, in combination with  Lemma \ref{S1est}:
\medskip

\begin{cor}\label{inversest}
Let $T=\rho(u)\in \S^p,$ where $1\le p\le\infty,$ and let  $F\in\S(\RR^{2d}).$ Then 
$$
\|\F_\si(F u)\|_{L^p(\RR^{2d})}\le \|\rho(F)\|_{\S^1}\|T\|_{\S^p}\le C \|\F_\si(F)\|_{(4d+2,2)}\, \|L_{\F_\si u}\|_{\S^p}.
$$
 \end{cor}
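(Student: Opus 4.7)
The plan is to combine Lemma \ref{wernerg}, Young's inequality (Proposition \ref{youngw}), and Lemma \ref{S1est} in a direct chain. There is no real obstacle; the corollary is essentially a packaging of three results.

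For the first inequality, since $T=\rho(u)\in\S^p\subset\S^\infty$ and $F\in\S(\RR^{2d})$, Lemma \ref{wernerg} (applied with $\phi:=F$) yields the pointwise identity
\[
T\star\rho(F)=\F_\si(Fu).
\]
Next, since $\rho(F)\in\S^1$ by Lemma \ref{S1est}, Proposition \ref{youngw} applied with exponents $p,q,r$ satisfying $q=1$ (so that $r=p$) gives
\[
\|T\star\rho(F)\|_{L^p(\RR^{2d})}\le\|T\|_{\S^p}\,\|\rho(F)\|_{\S^1}.
\]
Combining the last two displays yields the first claimed inequality.

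For the second inequality, I would use two identifications. First, since $L_a=\rho(\F_\si a)$ and $\F_\si\circ\F_\si=\mathrm{Id}$, we have $\rho(F)=L_{\F_\si F}$ and $T=\rho(u)=L_{\F_\si u}$. In particular,
\[
\|T\|_{\S^p}=\|L_{\F_\si u}\|_{\S^p}.
\]
Second, applying Lemma \ref{S1est} to the Schwartz function $\vp:=\F_\si F$ (and using $\F_\si\circ\F_\si=\mathrm{Id}$) gives
\[
\|\rho(F)\|_{\S^1}=\|L_{\F_\si F}\|_{\S^1}\le C\,\|F\|_{(4d+2,2)}\simeq C\,\|\F_\si(F)\|_{(4d+2,2)},
\]
where in the last step I invoke the norm equivalence \eqref{equiv1} (which already appeared in the proof of Lemma \ref{S1est}). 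Substituting the two bounds into the right side of the first inequality gives the second inequality and completes the proof.
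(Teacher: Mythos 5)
Your proof is correct and follows exactly the route the paper intends: the paper gives no separate proof, stating only that the corollary is immediate from Lemma \ref{wernerg}, Young's inequality in Proposition \ref{youngw} (with $q=1$, $r=p$), and Lemma \ref{S1est}, which is precisely the chain you spell out, including the identifications $\rho(F)=L_{\F_\si F}$, $\rho(u)=L_{\F_\si u}$ and the norm equivalence \eqref{equiv1}.
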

 
For the sake of completeness, let us finally also give a short proof of  the following well-known result:
\begin{lemma}\label{rhoSdense}
$\rho(\S(\RR^{2d}))$ is dense in $\S^p$ for $1\le p<\infty.$
\end{lemma}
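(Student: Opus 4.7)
The plan is to use the Hermite basis of $L^2(\RR^d)$ to construct finite rank truncations that both lie in $\rho(\S(\RR^{2d}))$ and converge to a given $T \in \S^p$ in the Schatten norm. Let $\{h_\kb\}_{\kb\in\NN^d}$ be the Hermite orthonormal basis from Lemma \ref{charS}, and let $P_N$ denote the orthogonal projection onto the span of $\{h_\kb : |\kb|\le N\}$. These $P_N$ are finite rank and converge strongly to the identity on $L^2(\RR^d)$.

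Given $T\in\S^p$, set $T_N:=P_NTP_N$. I first verify that $T_N\in\rho(\S(\RR^{2d}))$. Writing out $T_N$ in the Hermite basis gives
\begin{equation*}
T_N=\sum_{|\jb|,|\kb|\le N} a_{\jb,\kb}\, T_{h_\jb\otimes h_\kb},\qquad a_{\jb,\kb}:=(Th_\kb,h_\jb).
\end{equation*}
By \eqref{utensor} (and since the $h_\kb$ are real-valued), each $T_{h_\jb\otimes h_\kb}$ equals $\rho(A(h_\jb,h_\kb))$. Moreover, $h_\jb\otimes h_\kb\in\S(\RR^{2d})$, and since $\cP$ is a topological automorphism of $\S(\RR^{2d})$, one has $A(h_\jb,h_\kb)=\cP^{-1}(h_\jb\otimes h_\kb)\in\S(\RR^{2d})$. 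Hence $T_N=\rho(u_N)$ with $u_N:=\sum_{|\jb|,|\kb|\le N} a_{\jb,\kb}A(h_\jb,h_\kb)\in\S(\RR^{2d})$.

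It remains to prove that $T_N\to T$ in $\S^p$ for $1\le p<\infty$. I would first recall that finite rank operators form a dense subspace of $\S^p$ for $p<\infty$ (via a singular value decomposition of $T$ and truncation of the singular value sequence, which lies in $\ell^p$). Then, given $\ve>0$, choose a finite rank $F$ with $\|T-F\|_{\S^p}<\ve$ and estimate
\begin{equation*}
\|T_N-T\|_{\S^p}\le \|P_N(T-F)P_N\|_{\S^p}+\|P_NFP_N-F\|_{\S^p}+\|F-T\|_{\S^p}\le 2\ve+\|P_NFP_N-F\|_{\S^p},
\end{equation*}
where I used \eqref{ATB} together with $\|P_N\|\le 1$. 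For the finite rank $F$, $P_NFP_N\to F$ in the operator norm as $N\to\infty$, and since all operators involved have rank bounded uniformly in $N$ (by $\rank F$), this convergence also holds in the $\S^p$-norm. Taking $N$ large enough yields $\|T_N-T\|_{\S^p}<3\ve$, and the proof is complete.

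I do not foresee a genuine obstacle: the only subtle point is that strong operator convergence $P_NTP_N\to T$ does \emph{not} in general imply $\S^p$-convergence, which is precisely why one must pass through the density of finite rank operators. Once that density is in hand, the argument is straightforward, and the key algebraic input -- that every operator $T_{h_\jb\otimes h_\kb}$ is of the form $\rho(\phi)$ with $\phi\in\S(\RR^{2d})$ -- was already essentially established in the proof of Lemma \ref{wernerg}.
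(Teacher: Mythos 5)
Your proof is correct. It differs from the paper's argument in the mechanism, though both rest on the same two pillars: the algebraic fact that $T_{f\otimes g}=\rho(\cP^{-1}(f\otimes\overline g))$ lands in $\rho(\S(\RR^{2d}))$ whenever $f,g\in\S(\RR^d)$, and the density of finite-rank operators in $\S^p$ for $p<\infty$ coming from the $\ell^p$-summability of the singular values. The paper takes a singular value decomposition $T=\sum_l s_l\,T_{\vp_l\otimes\overline{\psi_l}}$, truncates the sum, and then approximates each singular vector $\vp_l,\psi_l$ by Schwartz functions, using $\|T_{f\otimes g}\|_{\S^p}=\|f\|_{L^2}\|g\|_{L^2}$ to control the error; this is slightly more direct, since the rank-one errors are estimated in one step. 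You instead compress by the Hermite projections $P_N$, so that $P_NTP_N$ is \emph{automatically} a finite combination of $T_{h_\jb\otimes h_\kb}$ with Schwartz entries, and you then need the three-term estimate through an auxiliary finite-rank $F$ to upgrade the strong convergence $P_NTP_N\to T$ to $\S^p$-convergence -- a subtlety you correctly identify and handle (note only that the rank of $P_NFP_N-F$ is bounded by $2\,\rank F$ rather than $\rank F$, which changes nothing). What your route buys is a canonical, basis-dependent approximating sequence $u_N=\sum a_{\jb,\kb}A(h_\jb,h_\kb)\in\S(\RR^{2d})$ that meshes with the Hermite expansions already used in Lemmas \ref{S1est} and \ref{wernerg}; what the paper's route buys is that it never needs to discuss convergence of compressions, only norms of rank-one differences.
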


\begin{proof} 
If $T\in\S^p\subset \K,$ then let $\{s_l\}_{l\in \NN}$ denote the non-increasing sequence of singular values $s_l\ge 0$ of $T,$ and let 
\begin{equation}\label{singval}
T=\sum\limits_{l=0}^\infty s_l\, T_{\vp_l\otimes \overline{\psi_l}}
\end{equation}
be a singular value decomposition of $T,$ where the $\{\vp_l\}_l$ and the $\{\psi_l\}_l$ form orthonormal bases of the Hilbert space $L^2(\RR^d)$ (see e.g., \cite{Scha},\cite{Si}).Then
$$
\big(\sum\limits_{l=0}^\infty s_l^p\big)^{\frac 1p}=\|T\|_{\S^p}<\infty.
$$
If we put  $T_n:=\sum\limits_{l=n}^\infty s_l\, T_{\vp_l\otimes \overline{\psi_l}}, \ n\in \NN,$ then 
$$
\|T-T_n\|_{\S^p}=(\sum\limits_{l=n+1}^\infty s_l^p)^{1/p}\to 0\qquad\text{as}\  n\to \infty.
$$ 
Since  $(\S^p,\|\cdot\|_{\S^p})$ is a Banach space, we thus see that it will suffice to prove that  every rank-one operator $T_{f\otimes g},$ with $f,g\in L^2(\RR^d),$ lies in the closure of $\rho(\S(\RR^{2d}))$ in $\S^p.$ Note that 
$$
\|T_{f\otimes g}\|_{\S^p}=\|f\|_{L^2}\|g\|_{L^2}.
$$

Thus, if we choose sequences $\{f_n\}_n, \{g_n\}_n$ in $\S(\RR^d)$ such that $f=\lim f_n$ and $g=\lim g_n$ in $L^2,$
then this implies that $T_{f\otimes g}=\lim T_{f_n\otimes g_n}$ in $\S^p.$ Moreover, since $f_n\otimes g_n\in\S(\RR^{2d}),$ 
our previous discussion show that $T_{f_n\otimes g_n}\in \rho(\S(\RR^{2d}).$ Our claim follows.
\end{proof}


\section{Schatten class estimates for pseudo-differential operators in the Weyl calculus}\label{Schaest}
Our key result is the following

\begin{thm}\label{Taest}
Let $\vp\in \S(\RR^{2d}),$ and  define for any $a\in  \S'(\RR^{2d})$ the linear operator 
$$M_a:=L_{a*\vp}.$$
Then there is a constant $C>0$,  such that  for every $1\le p\le\infty,$ 
\begin{equation}\label{tain}
\|M_a\|_{\S^p}\le C\Big( \sum\limits_{|\al|+|\beta|\le 4d+2}\|\pa^\al (z^\beta \F_\si(\vp)\|_{L^2(\RR^{2d})}\Big)
\|a\|_{L^p(\RR^{2d})}.
\end{equation}
\end{thm}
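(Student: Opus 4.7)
My plan is to establish the endpoint estimates at $p=1$ and $p=\infty$ separately, both with right-hand side of the form \eqref{tain}, and then invoke complex interpolation on the Schatten scale. The structural observation that drives the $p=1$ endpoint is that $(a*\vp)(z)=\int a(w)(\la_w\vp)(z)\,dw$ together with linearity of the Weyl quantization gives
\begin{equation*}
M_a = L_{a*\vp} = \int_{\RR^{2d}} a(w)\, L_{\la_w\vp}\, dw,
\end{equation*}
interpreted as a Bochner integral in the appropriate Schatten class.

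For $p=1$, Lemma~\ref{Sptrans} yields $\|L_{\la_w\vp}\|_{\S^1}=\|L_\vp\|_{\S^1}$ uniformly in $w$, so the integrand is Bochner-integrable in $\S^1$ whenever $a\in L^1$, and combined with Lemma~\ref{S1est} I get
\begin{equation*}
\|M_a\|_{\S^1}\le \|a\|_{L^1}\,\|L_\vp\|_{\S^1}\le C\|a\|_{L^1}\,\|\F_\si(\vp)\|_{(4d+2,2)}.
\end{equation*}
The semi-norm $\|\F_\si(\vp)\|_{(4d+2,2)}$ is, up to Leibniz rearrangements, exactly of the type on the right of \eqref{tain}.

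For the $p=\infty$ endpoint the integral representation is useless (since $\int|a|\,dw$ is not dominated by $\|a\|_\infty$), and I instead apply the Calder\'on--Vaillancourt theorem (Theorem~\ref{CaVa}) directly to the smooth symbol $a*\vp$:
\begin{equation*}
\|M_a\|_{\S^\infty}\le C\|a*\vp\|_{C^{2d+1}}\le C\|a\|_{L^\infty}\sum_{|\al|\le 2d+1}\|\pa^\al\vp\|_{L^1}.
\end{equation*}
It remains to bound this $L^1$-Sobolev semi-norm of $\vp$ by the weighted $L^2$-Sobolev semi-norm of $\F_\si\vp$ appearing in \eqref{tain}. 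Since $\F_\si\circ\F_\si=\mathrm{Id}$, I can write $\pa^\al\vp = \F_\si(q_\al\cdot\F_\si\vp)$ for a monomial $q_\al$ of degree $|\al|$; then the weighted Cauchy--Schwarz/Plancherel estimate
\begin{equation*}
\|\F_\si(h)\|_{L^1}\lesssim \Big(\int(1+|z|^2)^{2d+1}|\F_\si h|^2\,dz\Big)^{1/2}\lesssim \sum_{|\gamma|\le 2d+1}\|\pa^\gamma h\|_{L^2},
\end{equation*}
applied to $h = q_\al\F_\si\vp$ and followed by Leibniz, produces precisely the semi-norm $\sum_{|\al|+|\beta|\le 4d+2}\|\pa^\al(z^\beta\F_\si\vp)\|_{L^2}$ on the right of \eqref{tain}.

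Finally, complex interpolation via $\S^p = [\S^1,\S^\infty]_\theta$, $L^p = [L^1,L^\infty]_\theta$ with $1/p = 1-\theta$ upgrades the two endpoint bounds to the full range $1\le p\le\infty$, with a constant controlled by the maximum of the two endpoint constants. The main obstacle is the $p=\infty$ endpoint: Calder\'on--Vaillancourt is naturally phrased in terms of a $C^{2d+1}$-norm of the symbol, whereas \eqref{tain} demands an $L^2$-based weighted Sobolev norm on the symplectic Fourier side, so the Plancherel/Cauchy--Schwarz bridge between these two worlds must be arranged so that the derivative count lands exactly on $4d+2$.
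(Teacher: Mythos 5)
Your proposal is correct and follows essentially the same route as the paper: the $\S^1$-valued Bochner integral $M_a=\int a(w)L_{\la_w\vp}\,dw$ combined with Lemmas \ref{Sptrans} and \ref{S1est} for $p=1$, Calder\'on--Vaillancourt for $p=\infty$, and complex interpolation. The only (immaterial) difference is that you run the Cauchy--Schwarz/Plancherel bridge on the Fourier side via $\pa^\al\vp=\F_\si(q_\al\F_\si\vp)$, whereas the paper applies Cauchy--Schwarz with the weight $\langle x\rangle^{-m}$ directly to $\pa^\al\vp$ and then invokes Plancherel once; both land within the $(4d+2,2)$ semi-norm.
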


\begin{proof}

Consider first the case $p=\infty.$ Since  $\S^\infty=\B(L^2(\RR^d)),$ we have $\|M_a\|_{\S^\infty}=\|M_a\|.$ 
Moreover, $\pa^\al(a*\vp)=a*(\pa^\al\vp),$ so that
$$
\|\pa^\al(a*\vp)\|_\infty\le \|\pa^\al\vp\|_1\|a\|_\infty.
$$
Thus, by Theorem \ref{CaVa}, 
\begin{equation}\label{sinfty}
\|M_a\|_{S^\infty}=\|L_{a*\vp}\|\le  C\|a*\vp\|_{C^{2d+1}}\le C\sum\limits_{|\al|\le 2d+1}  \|\pa^\al\vp\|_{L^1(\RR^{2d})}\|a\|_\infty.
\end{equation}

\medskip
Assume next that $p=1.$ Since $a*\vp(z)=\int a(w)\vp(z-w) dw,$ i.e., 
$$
a*\vp=\int a(w)\la_w\vp\, dw,
$$
(as  an $L^1(\RR^{2d})$-valued Bochner integral), we have
$$
L_{a*\vp}=\int a(w)L_{\la_w\vp} \, dw
$$
as  an $S^1$-valued Bochner integral, 
 and thus 
$$
\|L_{a*\vp}\|_{\S^1}\le\int |a(w)| dw \cdot \sup\limits_{w\in \RR^{2d}}\|L_{\la_w\vp}\|_{\S^1}.
$$
By Lemma \ref{Sptrans} and  Lemma \ref{S1est}, we thus get
\begin{equation}\label{s1}
\|M_a\|_{S^1}\le \|a\|_{L^1}\|L_\vp\|_{\S^1}\le C\|\F_\si(\vp)\|_{(4d+2,2)}\,\|a\|_1.
\end{equation}

Next, it follows by standard arguments (using Cauchy-Schwarz' inequality  on $\pa^\al \vp=\langle x\rangle^{-m} (\langle x\rangle^{m}\pa^\al \vp),$
with $m\in\{d+1,d+2\}$ chosen so that $m$ is even) that 
$$
\sum\limits_{|\al|\le 2d+1}  \|\pa^\al\vp\|_{L^1(\RR^{2d})}\le C \|\vp\|_{(4d+2,2)}.
$$
Note here that $2d+1+d+2\le 4d+2.$ And, again by Plancherel's theorem, we can show that  $\|\vp\|_{(4d+2,2)} \simeq \|\F_\si(\vp)\|_{(4d+2,2)}. $ Thus we may finally obtain  from \eqref{sinfty} an estimate analogous  to \eqref{s1}:
\begin{equation}\label{sinfty2}
\|M_a\|_{S^\infty}\le C \|\F_\si(\vp)\|_{(4d+2,2)}\; \|a\|_\infty.
\end{equation}

\smallskip

Estimate \eqref{tain} now follows from \eqref{s1} and \eqref{sinfty2} by complex  interpolation.
\end{proof}

The  next  theorem is our main result.  It is an easy consequence of Theorem \ref{Taest} and Corollary \ref{inversest}. Note that it greatly extends and improves on the last statements in Theorem 1.4 in \cite{LSa}, which only apply to compactly supported non-negative Radon measures $\mu,$ whereas we can consider arbitrary compactly supported distributions $u.$

\begin{thm}\label{schattenests}
There is a constant $C>0,$ such that the following hold true:

If $u\in\E'(\RR^{2d})$ is a distribution with compact support contained in a closed ball $\overline {B(z_0,R)}$ of radius $R\ge 1,$ then for  $1\le p\le \infty,$ 
\begin{equation}\label{schattest1}
\|L_{\F_\si(u)}\|_{\S^p}\le C R^{5d+2} \|\F_\si(u)\|_{L^p(\RR^{2d})},
\end{equation}
and, conversely, 
\begin{equation}\label{schattest2}
\|\F_\si(u)\|_{L^p(\RR^{2d})}\le C R^{5d+2} \|L_{\F_\si(u)}\|_{\S^p}.
\end{equation}

\end{thm}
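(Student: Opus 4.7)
The plan is to cut off $u$ by a smooth bump $\chi\equiv 1$ on $\supp u$, write $\F_\si u = \F_\si u\ast \F_\si\chi$, and then read off \eqref{schattest1} from Theorem~\ref{Taest} and \eqref{schattest2} from Corollary~\ref{inversest}. The exponent $R^{5d+2}$ will come entirely from the scaling behaviour of $\|\chi\|_{(4d+2,2)}$.

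Before constructing $\chi$, I first reduce to the case $z_0=0$. Both quantities in \eqref{schattest1} are invariant under $u\mapsto\la_w u$: for $\F_\si u$ this is because $\F_\si(\la_w u)$ is a unimodular multiple of $\F_\si u$; for $\rho(u)=L_{\F_\si u}$, a short computation using \eqref{proj} together with the identity $\rho(w_0)\rho(z)\rho(w_0)^{*}=e^{-2\pi i\si(w_0,z)}\rho(z)$ gives
\begin{equation*}
\rho(\la_w u)\;=\;\rho(m_w u)\,\rho(w)\;=\;\rho(w/2)\,\rho(u)\,\rho(w/2)^{*}\rho(w),\qquad m_w(z):=e^{i\pi\si(z,w)},
\end{equation*}
so $\rho(\la_w u)$ differs from $\rho(u)$ only by unitary conjugation and a unitary factor. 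Hence I may assume $\supp u\subset \overline{B(0,R)}$.

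Next I fix $\chi_0\in C_c^\infty(\RR^{2d})$ with $\chi_0\equiv 1$ on $\overline{B(0,1)}$ and set $\chi(z):=\chi_0(z/R)$. A direct change of variables gives
\begin{equation*}
\|\pa^\al(z^\beta\chi)\|_{L^2(\RR^{2d})}\;=\;R^{|\beta|-|\al|+d}\,\|\pa^\al(y^\beta\chi_0)\|_{L^2(\RR^{2d})},
\end{equation*}
so maximising over $|\al|+|\beta|\le 4d+2$ (worst case $|\al|=0$, $|\beta|=4d+2$, using $R\ge 1$) yields $\|\chi\|_{(4d+2,2)}\le CR^{5d+2}$. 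Because $\chi u = u$, we have $\F_\si u = \F_\si\chi\ast \F_\si u$. Setting $a:=\F_\si u$ and $\vp:=\F_\si\chi$ (so that $\F_\si\vp=\chi$), the operator $L_{\F_\si u}$ coincides with the operator $M_a = L_{a\ast\vp}$ of Theorem~\ref{Taest}, and that theorem delivers
\begin{equation*}
\|L_{\F_\si u}\|_{\S^p}\;\le\;C\|\chi\|_{(4d+2,2)}\,\|\F_\si u\|_{L^p}\;\le\;CR^{5d+2}\|\F_\si u\|_{L^p},
\end{equation*}
which is \eqref{schattest1}. For the reverse inequality \eqref{schattest2}, I apply Corollary~\ref{inversest} with $T=L_{\F_\si u}$ and $F=\chi$; the identity $\F_\si(\chi u)=\F_\si u$ and the comparability $\|\F_\si\chi\|_{(4d+2,2)}\simeq\|\chi\|_{(4d+2,2)}$ (by the Plancherel and symplectic-rotation step used in the proof of Lemma~\ref{S1est}) produce the bound.

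The genuine difficulty is the preliminary reduction $z_0\to 0$: if one took $\chi(z)=\chi_0((z-z_0)/R)$ directly, the Leibniz expansion of $\pa^\al(z^\beta\chi)$ would introduce factors of $|z_0|^{|\beta|}$ that spoil the $R$-only dependence of the constants. Once the translation-invariance of $\|\rho(u)\|_{\S^p}$ is in hand, the rest is just the scaling bookkeeping that pins down the exponent $5d+2$.
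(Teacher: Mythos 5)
Your argument is correct and follows essentially the same route as the paper: cut off by $\chi_0(\cdot/R)$, feed $a=\F_\si u$ and $\vp=\F_\si\chi$ into Theorem \ref{Taest} resp.\ Corollary \ref{inversest}, and track the scaling of $\|\chi\|_{(4d+2,2)}$; your reduction to $z_0=0$ via the exact unitary equivalence $\rho(\la_w u)=\rho(w/2)\,\rho(u)\,\rho(w/2)^{*}\rho(w)$ is a slightly cleaner variant of the paper's one-sided factorization $\rho(\la_{z_0}v)=\rho(e^{i\pi\si(\cdot,z_0)}v)\rho(z_0)$. One small precaution: to guarantee $\chi u=u$ for a general distribution $u$ you need $\chi\equiv 1$ on an open neighborhood of $\supp u$, so take $\chi_0\equiv 1$ on $\overline{B(0,2)}$ (as the paper does) rather than only on $\overline{B(0,1)}$.
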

\begin{proof} To prove \eqref{schattest1}, let us first consider the case $z_0=0.$ We then choose $\chi\in C^\infty_0(\RR^{2d})$ so that $\chi=1$ on $\overline {B(0,2)},$ 
and put $\chi_R(z):=\chi(z/R).$ Then $u=\chi_R u,$ so that
$$
\F_\si(u)=\F_\si(u)*\vp,
$$
with $\vp:=\F_\si(\chi_R).$ 
Thus, by Theorem \ref{Taest}, we can estimates 
$$
\|L_{\F_\si(u)}\|_{\S^p}\le C\|\F_\si(\F_\si(\chi_R)\|_{(4d+2,2)}\|\F_\si(u)\|_{L^p(\RR^{2d})}=C\|\chi_R\|_{(4d+2,2)}\|\F_\si(u)\|_{L^p(\RR^{2d})}.
$$

And, if $|\al|+|\beta|\le 4d+2,$ then it is easily seen by Leibniz' rule that 
$$
\|\pa^\al (z^\beta \chi_R)\|_2\lesssim R^{|\beta-\al|+2d/2}\le R^{5d+2},
$$
which proves the claimed estimate for the case $z_0=0.$

\medskip
If $z_0\in\RR^{2d}$ is arbitrary, we consider $v:=\la_{-z_0} u\in \E'.$ Then $v$ is supported in $\overline {B(0,R)}.$ Moreover, by \eqref{proj},
$$
L_{\F_\si(u)}=\rho (u)=\rho(\la_{z_0}v)=\rho(e^{\pi i \si(\cdot, z_0)} v) \rho(z_0),
$$
so that
$$
\|L_{\F_\si(u)}\|_{\S^p}\le \|\rho(e^{\pi i \si(\cdot, z_0)} v) \|_{\S^p}= \|L_{\F_\si(e^{\pi i \si(\cdot, z_0)} v)} \|_{\S^p}.
$$
Thus, by the previous estimate for $z_0=0,$
\begin{eqnarray*}
\|L_{\F_\si(u)}\|_{\S^p}&\le& C R^{5d+2}\|\F_\si(e^{\pi i \si(\cdot, z_0)} v)\|_{L^p(\RR^{2d})}
=C R^{5d+2}\|\F_\si(v)\|_{L^p(\RR^{2d})}\\
&=&C R^{5d+2}\|\F_\si(u)\|_{L^p(\RR^{2d})}.
\end{eqnarray*}

\medskip
Let us next turn to estimate \eqref{schattest2}. In a similar way as before, we may reduce to the case $z_0=0.$ Choosing then $F:=\chi_R$ in Corollary \ref{inversest}, we have $Fu=u,$ so that we may estimate 
$$
\|\F_\si(u)\|_{L^p(\RR^{2d})}\le  C \|\F_\si(\chi_R)\|_{(4d+2,2)}\, \|L_{\F_\si u}\|_{\S^p}.
$$
But, by what we have proved before, $\|\F_\si(\chi_R)\|_{(4d+2,2)}\simeq \|\chi_R\|_{(4d+2,2)}\lesssim R^{5d+2},$ which completes the proof.
\end{proof}

With a bit more effort, we can also prove the following extension of the first statement in Theorem 1.4 of \cite{LSa}, whose proof was based on a Tauberian theorem. Our extension to arbitrary compactly supported distributions of that result avoids  Tauberian arguments and is rather  making  use again of the Calderón-Vaillancourt theorem and Lemma \ref{wernerg}, instead. By 
$C_\infty=C_\infty(\RR^{2d})$ we shall denote the Banach space of all continuous functions on $\RR^{2d}$ which vanish at infinity, endowed with the sup-norm.
\begin{thm}\label{compactL}
Suppose $u\in\E'(\RR^{2d})$ is a distribution with compact support. Then the operator $L_{\F_\si(u)}$ is compact if and only if 
$\F_\si(u)$ vanishes at infinity, i.e., $\F_\si(u)\in C_\infty(\RR^{2d}).$
\end{thm}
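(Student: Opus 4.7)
My plan is to prove both implications by a single idea: multiplying $u$ by a smooth cutoff $\chi$ that leaves it unchanged, and then using Lemma \ref{wernerg} on one side and Theorem \ref{Taest} on the other, in each case combined with a simple approximation argument.

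Fix once and for all $\chi\in C_c^\infty(\RR^{2d})$ with $\chi\equiv 1$ on a neighborhood of $\supp u$, so that $\chi u=u$ as distributions, and set $\vp:=\F_\si(\chi)\in \S(\RR^{2d})$. Since $\F_\si$ converts products to Euclidean convolutions (a direct calculation from the definition, using $\F_\si^2=\mathrm{Id}$), we have the identity
$$\F_\si(u)=\F_\si(\chi u)=\F_\si(\chi)*\F_\si(u)=\F_\si(u)*\vp,$$
which will be used in both directions.

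For the \emph{necessity} direction, assume $T:=L_{\F_\si(u)}=\rho(u)$ is compact. Lemma \ref{wernerg} with $\phi=\chi$ gives
$$T\star \rho(\chi)=\F_\si(\chi u)=\F_\si(u).$$
Since $\rho(\S(\RR^{2d}))$ is dense in $\S^1$ by Lemma \ref{rhoSdense}, and $\S^1$ is operator-norm dense in $\K$ (finite-rank operators being dense in $\K$ in operator norm), I may choose $\phi_n\in\S(\RR^{2d})$ with $\rho(\phi_n)\to T$ in operator norm. By identity \eqref{star}, $\rho(\phi_n)\star \rho(\chi)=\F_\si(\phi_n)*\F_\si(\chi)\in\S(\RR^{2d})\subset C_\infty(\RR^{2d})$. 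Young's inequality for Werner's convolution (Proposition \ref{youngw}) with $p=r=\infty$, $q=1$ then yields
$$\|\rho(\phi_n)\star\rho(\chi)-T\star\rho(\chi)\|_{L^\infty}\le \|\rho(\phi_n)-T\|_{\S^\infty}\,\|\rho(\chi)\|_{\S^1}\longrightarrow 0,$$
so $\F_\si(u)=T\star\rho(\chi)$ is a uniform limit of $C_\infty$-functions, hence itself in $C_\infty(\RR^{2d})$.

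For the \emph{sufficiency} direction, assume $a:=\F_\si(u)\in C_\infty$. By the initial identity $a=a*\vp$, so $L_a=L_{a*\vp}=M_a$ in the notation of Theorem \ref{Taest}. Pick $\eta\in C_c^\infty(\RR^{2d})$ with $\eta\equiv 1$ on $B(0,1)$, and set $a_n(z):=\eta(z/n)\,a(z)\in C_c(\RR^{2d})$. Since $a$ vanishes at infinity, $\|a-a_n\|_{L^\infty}\to 0$. Theorem \ref{Taest} with $p=\infty$ gives
$$\|M_a-M_{a_n}\|_{\S^\infty}\le C(\vp)\,\|a-a_n\|_{L^\infty}\longrightarrow 0.$$
Because $a_n$ is bounded with compact support and $\vp\in\S$, the standard computation shows $a_n*\vp\in\S(\RR^{2d})$, so by Lemma \ref{S1est} we have $M_{a_n}=L_{a_n*\vp}\in\S^1\subset\K$. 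Thus $L_a$ is an operator-norm limit of compact operators, hence compact.

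The main obstacle I anticipate is the tidy justification used in the necessity direction that $\rho(\S(\RR^{2d}))$ is operator-norm dense in $\K$. This is not stated explicitly in the paper, but follows from Lemma \ref{rhoSdense} (density in $\S^1$, which is stronger than operator norm) together with the standard fact that $\S^1$ is operator-norm dense in $\K$. If one preferred to avoid this density step, a direct alternative would be to argue by hand that $T\star S\in C_\infty$ for $T\in\K$ and $S\in\S^1$ using strong continuity of the projective representation $\rho$ (which gives continuity of $w\mapsto\rho(-w)T\rho(w)$ in operator norm for compact $T$) and the trace estimate \eqref{ATB}; the approximation route is however more transparent and directly mirrors the sufficiency argument.
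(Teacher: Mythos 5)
Your proof is correct, and although it shares the paper's skeleton (the cutoff identity $\chi u=u$, Lemma \ref{wernerg} for the "compact $\Rightarrow C_\infty$" direction, approximation by trace-class operators throughout), both halves are implemented differently enough to be worth comparing. For sufficiency, the paper mollifies the distribution itself, setting $u_\ve=u*\psi_\ve$, and must then verify convergence of all derivatives up to order $2d+1$ of $\F_\si u-\F_\si u_\ve$ (via Leibniz and the convolution identity) before applying Calder\'on--Vaillancourt directly; you instead truncate the symbol $a=\F_\si(u)$ in physical space and invoke Theorem \ref{Taest} at $p=\infty$, so that only $\|a-a_n\|_\infty\to 0$ is needed --- this is cleaner, since it reuses the quantitative bound \eqref{tain} already established, and it sidesteps the derivative bookkeeping entirely (note that $M_a-M_{a_n}=M_{a-a_n}$ by linearity, and $a_n*\vp\in\S$ so that Lemma \ref{S1est} applies, exactly as you say). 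For necessity, the paper introduces the bilinear map $F_{A,B}(w)=\tr[A^{\rho(-w)}B]$, proves its continuity from $\B(L^2(\RR^d))\times\S^1$ into $C_b(\RR^{2d})$ via \eqref{ATB}, and reduces through singular value decompositions to an explicit computation for rank-one tensors of Schwartz functions; you reach the same conclusion by approximating $T$ in operator norm by $\rho(\phi_n)$ with $\phi_n\in\S(\RR^{2d})$ --- legitimate, since Lemma \ref{rhoSdense} gives $\S^1$-norm density, which dominates the operator norm, and finite-rank operators are operator-norm dense in $\K$ --- and then quoting \eqref{star} together with the endpoint case $(p,q,r)=(\infty,1,\infty)$ of Proposition \ref{youngw}, which is precisely the continuity statement the paper re-derives as \eqref{betaest}. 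Both routes are sound; yours leans more heavily on the already-proved identities \eqref{star} and \eqref{tain} and is correspondingly shorter, while the paper's is more self-contained at the rank-one level.
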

\begin{proof} 
Assume first that $\F_\si(u)\in C_\infty(\RR^{2d}).$ Choosing  $\chi\in C^\infty_0(\RR^{2d})$ so that $\chi=1$ on a neighborhood of  the support of $u,$ we  see that  $\F_\si(u)=\F_\si(u)*\F_\si(\chi).$ Next, we fix a Dirac family $\{\psi_\ve\}_{\ve>0}$ in  $C^\infty_0(\RR^{2d})$ of the form $\psi_\ve(z)=\ve^{-2d}\psi(z/\ve),$ where $\psi\in C^\infty_0(\RR^{2d})$ is supported in the unit ball $\overline {B(0,1)}$ and  $\psi(z)=1$ for $|z|\le 1/2,$ and put 
$$
u_\ve:=u*\psi_\ve.
$$
Then $u_\ve\in\D,$ and 
$$
\F_\si u-\F_\si u_\ve=(\F_\si u)(1-(\F_\si\psi)(\ve\,\cdot))=(\F_\si(u)*\F_\si(\chi))(1-(\F_\si\psi)(\ve\,\cdot)).
$$
By Leibniz' rule, we get
$$
\pa^\al(\F_\si u-\F_\si u_\ve)=\sum\limits_{0\le\beta\le \al}\binom \al \beta\big((\F_\si(u)*\pa^{\al-\beta}\F_\si(\chi)\big)\pa^\beta(1-(\F_\si\psi)(\ve\,\cdot)).
$$
Note that by our assumption on $\F_\si(u),$  $\F_\si(u)*\pa^{\al-\beta}\F_\si(\chi)\in C_\infty(\RR^{2d})$ for any $\al,\beta\in\NN^{2d},$ since  $\pa^{\al-\beta}\F_\si(\chi)\in\S.$ This easily implies that for every $\al\in\NN^{2d},$ 
$$
\|D^\al(\F_\si u-\F_\si u_\ve)\|_\infty \to 0\qquad\text{as}\ \ve\to 0.
$$
Thus, by Calder\'on-Vaillancourt's Theorem \ref{CaVa}, we conclude that
\begin{equation}\label{approxL}
\|L_{\F_\si(u)}-L_{\F_\si(u_\ve)}\|\to 0\qquad\text{as}\ \ve\to 0.
\end{equation}
But $L_{\F_\si(u)}\in\B(L^2(\RR^d))$ by Theorem \ref{schattenests}, and $L_{\F_\si(u_\ve)}\in\S^1\subset\K$ by Lemma \ref{S1est}, and since $\K$ is a closed subspace  of $\B(L^2(\RR^d)),$ this implies that also $L_{\F_\si(u)}\in\K.$

\medskip
Conversely, assume now that $T:=L_{\F_\si(u)}=\rho(u)\in\K.$ We then show that $\F_\si(u)\in C_\infty(\RR^{2d}).$
Since $u$ has compact support, $\F_\si(u)$ is real analytic, hence in particular continuous. Moreover, choosing $\phi:=\chi$ in Lemma \ref{wernerg}, we see that 
\begin{equation}\label{Fsiu}
\F_\si(u)(w)=\F_\si(\chi u)(w)=T\star\rho(\chi)(w)=\tr[T^{\rho(-w)}\, S],
\end{equation}
where we have set $T^{\rho(-w)}:=\rho(-w)T\rho(w)$ and $S:=P\rho(\chi)P.$ Note that, by Lemma \ref{S1est}, $S\in \S^1,$ since 
$\rho(\chi)=L_{\F_\si(\chi)},$ with $\F_\si(\chi)\in\S(\RR^{2d}).$

Consider  the bilinear mapping $\beta:(A,B)\mapsto F_{A,B}$ from $\B(L^2(\RR^d))\times \S^1\to C_b(\RR^{2d}),$ defined by
$$
F_{A,B}(w):= \tr[A^{\rho(-w)}\, B],\qquad w\in\RR^{2d},
$$
where $ C_b(\RR^{2d})$ denotes the Banach space of all bounded continuous functions on $\RR^{2d},$ endowed with the sup-norm $\|\cdot\|_\infty.$ Since by\eqref{ATB}
\begin{equation}\label{betaest}
\|F_{A,B}\|_\infty=\sup_w |F_{A,B}(w)|\le \|A\| \|B\|_{\S^1},
\end{equation}
we see that $\beta$ is continuous. Moreover, by \eqref{Fsiu}, $\F_\si(u)=F_{T,S}.$ 

Let next $T=\sum\limits_{l=0}^\infty t_l\, T_{\vp_l\otimes \overline{\psi_l}}$ and $S=\sum\limits_{l=0}^\infty s_l\, T_{g_l\otimes \overline{h_l}}$ be singular value decompositions of $T$ and $S,$ respectively, and let, for $n\in\NN,$  
$$
T_n=\sum\limits_{l=n}^\infty t_l\, T_{\vp_l\otimes \overline{\psi_l}},\quad S_n=\sum\limits_{l=n}^\infty s_l\, T_{g_l\otimes \overline{h_l}}.
$$
Since $t_l\to 0$ as $l\to\infty,$ we see that $\|T-T_n\|\to 0$ as $n\to \infty,$ and, as shown in the proof of Lemma \ref{rhoSdense}, 
$\|S-S_n\|_{\S^1}\to 0$ as $n\to \infty.$ By \eqref{betaest}, this implies that $F_{T_n,S_n}$ converges uniformly to $F_{T,S}$
as $n\to\infty,$ and thus it will suffice to show that $F_{T_n,S_n}\in C_\infty$ for every $n.$ This, in return, reduces to showing that 
$F_{T_{\vp\otimes \overline{\psi}},T_{g\otimes \overline{h}}}\in C_\infty$ for all $\vp,\psi,g,h\in L^2(\RR^d).$
But, by approximating these functions $\vp,\psi,g,h$ arbitrarily closely by Schwartz functions as in the proof of Lemma \ref{rhoSdense} and making again use of \eqref{betaest}, it will eventually suffice to prove that 
$F_{T_{\vp\otimes \overline{\psi}},T_{g\otimes \overline{h}}}\in C_\infty$ for all $\vp,\psi,g,h\in \S(\RR^d).$
\smallskip

But, one computes that $T_{\vp\otimes \overline{\psi}}^{\rho(-w)}=T_{\rho(-w)\vp\otimes \overline{\rho(-w)\psi}},$ hence, by 
\eqref{crosFW}, 
\begin{eqnarray*}
F_{T_{\vp\otimes \overline{\psi}},T_{g\otimes \overline{h}}}(w)&=&(\rho(w)g,\psi)(\vp,\rho(w)h)=\overline{A(\psi,g)}(w)A(\vp,h)(w)\\
&=&\overline{\cP^{-1}(w)(\psi\otimes g)}\cP^{-1}(h\otimes g)(w).
\end{eqnarray*}
Since $\psi\otimes g$ and $h\otimes g$ are assumed to be Schwartz functions, the same is true of $\overline{\cP^{-1}(\psi\otimes g)}$ and $\cP^{-1}(h\otimes g),$ and thus $F_{T_{\vp\otimes \overline{\psi}},T_{g\otimes \overline{h}}}\in \S\subset C_\infty.$ 
This concludes the proof of the theorem.
\end{proof} 

\section{Fourier restriction for Schatten classes}\label{frschatten}

The main Theorem 1.2 in \cite{LSa} is essentially an immediate consequence of  Theorem \ref{schattenests}, which also leads to sharper bounds with respect to the diameter of the support of the measure $\mu.$ 
\smallskip

Let us first recall the following definitions: for a trace class operator $S$ on $L^2(\RR^d),$ its {\it Fourier-Wigner transform} $\F_WS,$  also called 
{\it Fourier-Weyl transform}, is defined to be the function
$$
\F_WS(z):= \tr(\rho(z)^*S), \qquad z\in\RR^{2d}.
$$
The  formally adjoint operator to the {\it Fourier restriction operator} $g\mapsto \F_\si(g)|_{\supp \mu}$ is the {\it Fourier extension operator} 
$$
\E_\si(f):=\F_\si(fd\mu).
$$
The analogous extension operator formally adjoint to the operator $\F_W$ is the {\it quantum extension operator} 
$$
\E_W(f):=\rho(f d\mu)=\int_{\RR^{2d}} f(z) \rho(z)  d\mu(z),
$$
where the integral is to be understood in the weak operator topology sense.

\begin{thm}[Luef and Samuelsen]\label{restrictionS}
Let $\mu\in \M^1(\RR^{2d})$ be a non-negative bounded Radon measure with compact support. Then, for $1\le p,q\le\infty,$ with conjugate exponents $p',q',$ the following statements are equivalent:
 \begin{itemize}
\item[i)] There exists a constant $C_\si>0$ such that for any $g\in L^p(\RR^{2d}),$ 
$$
\|\F_\si(g)\|_{L^q(\mu)}\le C_\si\|g\|_{L^p(\RR^{2d})}.
$$
\item[ii)] There exists a constant $C_W>0$ such that for any $T\in \S^p,$ 
$$
\|\F_W(T)\|_{L^q(\mu)}\le C_W\|T\|_{\S^p}.
$$
\item[iii)] There exists a constant $C_\si>0$ such that for any $f\in L^{q'}(\mu),$ 
$$
\|\E_\si(f)\|_{L^{p'}(\RR^{2d})}\le C_\si \|f\|_{L^{q'}(\mu)}.
$$
\item[iv)]  There exists a constant $C_W>0$ such that for any $f\in L^{q'}(\mu),$
$$
\|\E_W(f)\|_{\S^{p'}}\le C_W \|f\|_{L^{q'}(\mu)}.
$$
\end{itemize}
\end{thm}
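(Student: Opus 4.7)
The plan is to split the four equivalences into three reductions: (i) $\Leftrightarrow$ (iii) by the classical Banach-space duality between $\F_\si$ and $\E_\si$; (ii) $\Leftrightarrow$ (iv) by the analogous Schatten-class duality between $\F_W$ and $\E_W$; and (iii) $\Leftrightarrow$ (iv) as an immediate corollary of Theorem \ref{schattenests}. All the real content lies in the third reduction, so the overall proof is essentially one line in each of the three steps.

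For (i) $\Leftrightarrow$ (iii), Fubini gives
\begin{equation*}
\int_{\RR^{2d}}\F_\si(g)(z)\,f(z)\,d\mu(z)\;=\;\int_{\RR^{2d}}g(w)\,\E_\si(f)(w)\,dw
\end{equation*}
(first for Schwartz $g$ and bounded $f$, then by density), identifying $\F_\si:L^p(\RR^{2d})\to L^q(\mu)$ and $\E_\si:L^{q'}(\mu)\to L^{p'}(\RR^{2d})$ as Banach-space adjoints, hence of equal operator norm. For (ii) $\Leftrightarrow$ (iv), using $\rho(z)^*=\rho(-z)$ and interchanging the trace with the Bochner integral defining $\E_W$, one checks that for $T\in \S^1$ and bounded $f$,
\begin{equation*}
\int_{\RR^{2d}}\F_W(T)(z)\,f(z)\,d\mu(z)\;=\;\tr\bigl(\E_W(\overline f)^*\,T\bigr).
\end{equation*}
Because the sesquilinear pairing $(A,B)\mapsto \tr(A^*B)$ realizes $(\S^p)^*=\S^{p'}$ for $1\le p<\infty$ and $f\mapsto \overline f$ is an isometry of $L^{q'}(\mu)$, the estimate (ii) on the dense subspace $\rho(\S(\RR^{2d}))\subset \S^p$ (Lemma \ref{rhoSdense}) is equivalent to (iv) with the same constant; the endpoint $p=\infty$ is handled by instead using $\K^*=\S^1$ and running the duality in the opposite direction.

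For (iii) $\Leftrightarrow$ (iv), fix $f\in L^{q'}(\mu)$ and set $u:=f\,d\mu$, a compactly supported Radon measure, hence an element of $\E'(\RR^{2d})$ with support in some fixed ball $\overline{B(z_0,R)}$. From the definitions, $\E_\si(f)=\F_\si(f\,d\mu)=\F_\si(u)$, and since $L_a=\rho(\F_\si a)$ together with $\F_\si\circ\F_\si=\mathrm{Id}$ gives $L_{\F_\si(u)}=\rho(u)=\E_W(f)$, Theorem \ref{schattenests} immediately yields
\begin{equation*}
\|\E_W(f)\|_{\S^{p'}}\;=\;\|L_{\F_\si(u)}\|_{\S^{p'}}\;\simeq_R\;\|\F_\si(u)\|_{L^{p'}(\RR^{2d})}\;=\;\|\E_\si(f)\|_{L^{p'}(\RR^{2d})},
\end{equation*}
with implicit constants of order $R^{5d+2}$. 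This establishes (iii) $\Leftrightarrow$ (iv) with $C_\si\simeq_R C_W$, and in particular produces polynomial, rather than exponential, dependence of the transfer constants on the diameter of $\supp\mu$, as promised in Remark \ref{improveconst}. There is no genuine obstacle once Theorem \ref{schattenests} is in hand; the only care needed is in tracking the complex conjugations and adjoints in the Schatten-class duality for the (ii) $\Leftrightarrow$ (iv) step.
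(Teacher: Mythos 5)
Your proposal is correct and follows essentially the same route as the paper: the equivalence (iii) $\Leftrightarrow$ (iv) via Theorem \ref{schattenests} applied to $u=f\,d\mu$, and the equivalences (i) $\Leftrightarrow$ (iii) and (ii) $\Leftrightarrow$ (iv) by the standard $L^p$--$L^{p'}$ and $\S^p$--$\S^{p'}$ dualities, are exactly the paper's three steps. The only cosmetic difference is that the paper disposes of the endpoint $p'=\infty$ in (iii), (iv) by the trivial bounds $\F_\si(\M^1)\subset L^\infty$ and $\rho(\M^1)\subset\S^\infty$ rather than invoking Theorem \ref{schattenests} there, which changes nothing of substance.
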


\begin{proof} 
Our previous results allow for a very short proof of the equivalence of iii) and iv). Observe first that the estimates in iii) and iv) always hold true in a trivial way when $p'=\infty,$ since for $f\in L^{q'}(\mu)$ we have $\|f d\mu\|_{\M^1}\lesssim \|f\|_{L^{q'}(\mu)},$ by H\"older's inequality, and since $\F_\si(\M^1)\subset L^\infty$ and  $\rho(\M^1)\subset \S^\infty.$

\smallskip We may therefore assume that $p'<\infty.$ But, choosing $u:=fd\mu$ in Theorem \ref{schattenests}, we see that if $\mu$ is supported in a closed ball of radius $R\ge 1,$ then  
$$
\|\E_W(f)\|_{S^{p'}}\simeq_R \|\F_\si(f d\mu)\|_{L^{p'}(\RR^{2d})}=\|\E_\si(f)\|_{L^{p'}(\RR^{2d})}.
$$
The equivalence of iii) and iv) is now immediate.

\medskip
The equivalence of i) with iii) is a classical result, which follows easily by duality, since 
$$
\int \overline{\F_\si(g)}f d\mu=\int \overline g\, \E_\si(f) dz,
$$
and since on a general measure space the following holds true:
$$
\|F\|_{L^{p'}}=\sup\limits_{\|G\|_p\le 1}|( F,G)|, \qquad 1\le p\le \infty.
$$

Similarly, we have 
$$
\int \overline{\F_W(T)(z)} f(z) d\mu(z)=\tr (\E_W(f) T^*), 
$$
and 
$$
\|S\|_{\S^{p'}}=\sup\limits_{\|T\|_{\S^p}\le 1}|\tr (ST^*)| \qquad 1\le p\le \infty
$$
(see, e.g., \cite{RS} for the cases $p'=1,\infty,$ and  \cite{Siop} for $1<p'<\infty$).
This allows to easily prove the equivalence of ii) and iv).
\end{proof} 

\begin{remark}\label{improveconst}
An analysis of the proof of Theorem 3.1 in \cite{LSa} shows that if $\mu$ is supported in a closed ball of radius $R\ge 1,$ and if  i) holds true, then ii) holds  with a constant
$C_W\le e^{\pi R^2/2} C_\si,$ and if ii) holds true, then i) holds  with a constant
$C_\si\le e^{\pi R^2/2} C_W.$ 

Our proof, based on Theorem \ref{schattenests}, gives sharper estimates, as it allow to replace the  reciprocal Gaussian factor $e^{\pi R^2/2}$ by a factor $CR^{5d+2},$ which grows only polynomially in $R.$  
\end{remark}



\begin{thebibliography}{99999999}

\bibitem  {BCT} Bennet, J., Carbery, A., and   Tao, T.,  {\em On the multilinear
    restriction  and Kakeya conjectures.} Acta Math.  196  (2006),  no. 2, 261--302.


\bibitem{BL}  
 Bergh, J., and  Löfström,
 J.
 \newblock {\em Interpolation Spaces. An Introduction,} Grundlehren Math. Wiss., vol. 223, Springer, 1976.
 
 \bibitem  {Bo1} Bourgain, J., {\em Besicovitch-type maximal operators and applications to
    Fourier analysis.}  Geom. Funct. Anal. 22 (1991), 147--187.
    
\bibitem {Bo2}  Bourgain, J., {\em Some new estimates on oscillatory integrals. Essays
    in Fourier Analysis in honor of E. M. Stein.} Princeton Math. Ser. 42, Princeton
    University Press, Princeton, NJ 1995, 83--112.
    
\bibitem   {Bo3}  Bourgain, J., {\em  Estimates for cone multipliers.} Oper. Theory Adv. Appl. 77 (1995), 1--16.
 
 
 \bibitem {BoG}  Bourgain, J., and  Guth, L., {\em Bounds on oscillatory integral operators based on multilinear estimates.} Geom. Funct. Anal., Vol.21 (2011)
    1239--1295.

\bibitem  {F1}  Fefferman, C.,  {\em Inequalities for strongly singular convolution
    operators}. Acta Math., (1970), 9--36.
    
\bibitem{Fo}
Folland, G.B., 
\newblock {\em Harmonic analysis in phase space.}
{Annals of Math. Studies} Number 122, Princeton University Press, Princeton, New Jersey 1989.

 \bibitem {Gr} Greenleaf, A., {\em Principal Curvature and Harmonic Analysis.} Indiana
    Univ. Math. J. Vol. 30, No. 4 (1981).

\bibitem{Gu16}  Guth, L., \newblock {\em A restriction estimate using polynomial partitioning.}
 J. Amer. Math. Soc.  29  (2016),  no. 2, 371--413.
    
 \bibitem{Gu18}  Guth, L.,  \newblock {\em  Restriction estimates using polynomial partitioning.}
    II. Acta Math. Vol. 221, No. 1 (2018), 81--142.
    
     \bibitem  {hr19} Hickman, J., and  Rogers, K., {\em Improved Fourier restriction estimates in higher dimensions.}
  Cambridge J. Math. 7 (2019), no. 3,  219--282.
  
  \bibitem  {HLW} Hong, G., Lai, X., and Wang, L. {\em Fourier restriction estimates on quantum Euclidean space. }
 Adv. Math. 430 (2023), paper No.109232.  
  
  
    
    \bibitem {ikm}  Ikromov, I. A.,  Kempe, M., and M\"uller, D.,  {\em Estimates for maximal
    functions associated with hypersurfaces in $\R^3$ and related problems in harmonic
    analysis.} Acta Math. 204 (2010), 151--271.
    
    \bibitem{IM-uniform}  Ikromov, I. A., and   M\"uller, D.,  {\em Uniform estimates for the
    Fourier transform of surface carried measures in  $\R^3$ and an application to Fourier
    restriction.} J. Fourier Anal. Appl., 17 (2011), no. 6, 1292--1332.
    
     
 \bibitem {IM} Ikromov, I. A.,  and  M\"uller, D., Fourier restriction for hypersurfaces
    in three dimensions and Newton polyhedra. {\em Annals of Mathematics Studies,} 194.
    Princeton
    University Press, Princeton, NJ, 2016.
    
  \bibitem  {La} $\L$aba, I.,  {\em Harmonic Analysis and the geometry of fractals.} Proc. ICMS 2014, Vo. III (2014),  315--329. 

 \bibitem  {lv10} Lee, S.,  and Vargas, A.,  {\em Restriction estimates for some surfaces with
    vanishing curvatures.} J. Funct. Anal.  258  (2010),  no. 9, 2884--2909.

\bibitem{LSa}
Luef, F., and Samuelsen, H.J,
\newblock {\em Fourier restriction for Schatten class operators and functions on phase space.}
{ ArXiv:2407.16259v3}.


\bibitem{LSk}
Luef, F., and Skrettingland, E.,
\newblock {\em Convolution for localization operators.} J. Math. Pure Appl. 9 no.118 (2018), 288--316.

\bibitem{MV1}
Mishra, M., and Vemuri, M.K., {\em The Weyl transform of a measure.}
Proc. Indian Acad. Sci. Math. Sci. 133, no. 2 (2023), paper 29, 11p.

\bibitem{MV2}
Mishra, M., and Vemuri, M.K., {\em The Weyl transform of a smooth measure on a real-analytic submanifold.}
(2024) preprint arXiv: 2406.03128.


\bibitem{Mo}
Mockenhaupt, G., {\em Salem sets and restriction properties of Fourier transforms.} 
Geom. Funct. Anal., Vol. 10 (2000), 1579--1587.



 \bibitem  {MVV1} Moyua, A., Vargas, A.,  and Vega, L., {\em Schr\"odinger maximal function
    and restriction properties of the Fourier transform.} Internat. Math. Res. Notices 16 (1996), 793--815.
    
\bibitem {MVV2} Moyua, A., Vargas, A., and  Vega, L., {\em Restriction theorems and maximal
    operators related to oscillatory integrals in $\R^3$.} Duke Math. J., 96 (3), (1999), 547--574.



\bibitem{RS}
Reed, M., and Simon, B., 
{\em Methods of modern mathematical physics. I.
Functional analysis.} Second edition. Academic Press, Inc. [Harcourt Brace Jovanovich, Publishers], New York, 1980. 


\bibitem{Scha}
Schatten, R.,
\newblock {\em Norm ideals of completely continuous operators.} Springer-Verlag, New York 1970. 


\bibitem{Si}
Simon, B.,
\newblock {\em Trace ideals and their applications.} Cambridge University Press, 1979.

\bibitem{Siop}
Simon, B.,
\newblock  {\em Operator theory: A comprehensive course in analysis.}  Part 4, Providence, R.I,: Amer. Math. Soc., 2015.

\bibitem {St1}  Stein, E.M., Oscillatory Integrals in Fourier Analysis. {\em  Beijing
    Lectures in Harmonic Analysis.} Princeton Univ. Press 1986.

 \bibitem  {Str} Strichartz, R. S., {\em Restrictions of Fourier transforms to quadratic
    surfaces and decay of solutions of wave equations}. Duke Math. J.  44  (1977), no. 3,
    705--714.
    
   \bibitem  {T2} Tao, T., {\em A Sharp bilinear restriction estimate for paraboloids.}
    Geom. Funct. Anal. 13, 1359--1384, 2003.
  
  
 \bibitem  {TVV} Tao, T., and Vargas, A., Vega, L.,{\em  A bilinear approach to the
    restriction and Kakeya conjectures.} J. Amer. Math. Soc. 11 (1998) no. 4 , 967--1000.
    
\bibitem  {TV1} Tao, T., and  Vargas, A.,  {\em A bilinear approach to cone multipliers I.
    Restriction estimates.} Geom. Funct. Anal. 10, 185--215, 2000.
    
\bibitem {TV2} Tao, T.,  and Vargas, A.,  {\em A bilinear approach to cone multipliers II.
    Applications.} Geom. Funct. Anal. 10, 216--258, 2000.

  \bibitem  {To}  Tomas,  P. A., {\em A restriction theorem for the Fourier transform.}
    Bull. Amer. Math. Soc. 81 (1975), 477--478.
    
 \bibitem  {WA18} Wang, H., {\em A restriction estimate in $\R^3$ using brooms}. Duke Math J. 171 no. 8, (2022),  1749--
1822.
\bibitem{W}
Werner, R., \newblock {\em Quantum harmonic analysis on phase space.}
 J. Math. Phys. 25 (5) (1984), 1404--1411.
 
  \bibitem  {W2} Wolff, T.,  {\em A Sharp Bilinear Cone Restriction Estimate.} Ann. of Math., Second Series, Vol. 153, No. 3, 661--698, 2001.
  \bibitem  {Z} Zygmund, A., {\em On Fourier coefficients and transforms of functions of two
    variables}. Studia Math. 50 (1974), 189--201.
\end{thebibliography}
\end{document}